\theoremstyle{definition} 
\newtheorem{thm}{Theorem}[section]
\newtheorem{prop}[thm]{Proposition}
\newtheorem{lem}[thm]{Lemma}
\newcommand{\mr}{\mathrm}
\newcommand{\mc}{\mathcal}
\newcommand{\Aut}{\mr{Aut}}
\newcommand{\bb}{\mathbb}
\newcommand{\bZ}{\bb{Z}}
\newcommand{\bR}{\bb{R}}
\newcommand{\bC}{\bb{C}}
\newcommand{\bP}{\bb{P}}
\newcommand{\bF}{\bb{F}}
\newcommand{\bE}{\bb{E}}
\newcommand{\fp}{\bF_p}
\newcommand{\zp}{\bZ_p}
\newcommand{\ol}{\overline}
\newcommand{\Sur}{\mr{Sur}}
\newcommand{\M}{\mr{M}}
\newcommand{\ze}{\zeta}
\newcommand{\al}{\alpha}
\newcommand{\ga}{\gamma}
\newcommand{\cok}{\mr{cok}}
\newcommand{\Hom}{\mr{Hom}}
\newcommand{\rank}{\mr{rank}}
\newcommand{\lt}{\left |}
\newcommand{\rt}{\right |}
\newcommand{\lf}{\left \lfloor}
\newcommand{\rf}{\right \rfloor}
\begin{document}

\title[Sharp threshold for random matrices over finite fields]{Sharp threshold for universality of cokernels of random matrices over finite fields}
\author{Jungin Lee}
\date{}
\address{J. Lee -- Department of Mathematics, Ajou University, Suwon 16499, Republic of Korea}
\email{jileemath@ajou.ac.kr}

\begin{abstract}
In this paper, we determine the sharp threshold for universality of cokernels of random matrices over finite fields. 
More precisely, we prove the following: given any constant $c>1$, let $A(n)$ be a random $n \times n$ matrix over $\mathbb{F}_p$ whose entries are independent and take any given value of $\mathbb{F}_p$ with probability at most $1 - \frac{c \log n}{n}$. 
Then the cokernels of $A(n)$ converge in distribution, as $n \to \infty$, to the same limiting law as the cokernels of uniform random $n \times n$ matrices over $\mathbb{F}_p$. This answers an open problem posed by Wood (2022).
\end{abstract}
\maketitle

\vspace{-3mm}
\section{Introduction} \label{Sec1}

For a prime $p$, let $\zp$ denote the ring of $p$-adic integers.
For a commutative ring $R$, write $\M_n(R)$ for the set of all $n \times n$ matrices over $R$.
Friedman and Washington \cite{FW89} proved that if $A(n)$ is a random matrix in $\M_n(\zp)$ distributed according to the Haar measure for each $n \ge 1$, then
\begin{equation*}
\lim_{n \to \infty} \bP(\cok(A(n)) \cong H) = \frac{1}{|\Aut(H)|} \prod_{i=1}^{\infty} (1-p^{-i})
\end{equation*}
for every finite abelian $p$-group $H$. Here, $\cok(A(n))$ denotes the cokernel of a matrix $A(n)$ and $\Aut(H)$ denotes the automorphism group of $H$.

Wood \cite{Woo19}, and Nguyen and Wood \cite{NW22} showed that the same limiting distribution holds for much broader classes of random matrices $A(n)$ whose entries are independent, and for which the reduction modulo $p$ of each entry is not too concentrated on any single value of $\fp$. 
To state their result, we recall the following definition. Let $\al$ be a positive real number.
A random element $x$ in the finite field $\fp$ is $\al$-\textit{balanced} if 
$$
\bP(x = r) \leq 1 - \al
$$ 
for every $r \in \fp$. A random element $x$ in $\zp$ is $\al$-\textit{balanced} if its reduction modulo $p$ is $\al$-\textit{balanced} as a random element in $\fp$. A random matrix $A$ in $\M_n(R)$ (where $R = \fp$ or $\zp$) is $\al$-\textit{balanced} if its entries are independent and $\al$-balanced. We note that in the definition of an $\al$-balanced matrix, the entries are not required to be identically distributed.

\begin{thm} \label{thm1a}
(\cite[Theorem 4.1]{NW22}) Let $(\al_n)_{n \ge 1}$ be a sequence of positive real numbers such that for every constant $\Delta > 0$, we have $\al_n \geq \frac{\Delta \log n}{n}$ for all sufficiently large $n$. Let $A(n)$ be an $\al_n$-balanced random matrix in $\M_n(\zp)$ for each $n \ge 1$. Then for every finite abelian $p$-group $H$,
\begin{equation*}
\lim_{n \to \infty} \bP(\cok(A(n)) \cong H) = \frac{1}{|\Aut(H)|} \prod_{i=1}^{\infty} (1-p^{-i}).
\end{equation*}
\end{thm}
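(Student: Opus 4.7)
The plan is via the $\Sur$-moment method. By the moment-determines-distribution results for finite abelian $p$-groups proved in \cite{Woo19}, it suffices to show that for every finite abelian $p$-group $G$,
\[
\lim_{n \to \infty} \bE\bigl(\#\Sur(\cok(A(n)), G)\bigr) = 1,
\]
which is the $G$-th $\Sur$-moment of the Cohen--Lenstra measure. Using the identification $\#\Sur(\cok(A), G) = \#\{F \in \Sur(\zp^n, G) : F A = 0\}$ together with independence of the columns $v_1,\dots,v_n$ of $A(n)$, the expectation equals
\[
\sum_{F \in \Sur(\zp^n, G)} \prod_{j=1}^n \bP(F v_j = 0).
\]
Since every such $F$ factors through $(\zp/p^k \zp)^n$ for any $k$ with $p^k G = 0$, the sum is effectively finite. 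In the uniform case each factor equals $|G|^{-1}$ and the sum tends to $1$, recovering Friedman--Washington; the task is to show this persists under only the $\al_n$-balanced hypothesis.

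The core of the argument is a code/non-code dichotomy in the spirit of \cite{Woo19,NW22}. Viewing a surjection $F : \zp^n \to G$ via the tuple $(w_1,\dots,w_n) \in G^n$ of standard-basis images, one calls $F$ a $\dt$-\emph{code} if, for every proper subgroup $H' \subsetneq G$ (and, in the more refined version, for appropriate auxiliary structures tied to the $p$-adic filtration of $G$), at least a $\dt$-fraction of the $w_j$ lie outside $H'$; otherwise $F$ is a \emph{non-code}. For codes, a standard character-sum computation using only the $\al_n$-balancedness of the $v_j$ yields $\bP(F v_j = 0) = |G|^{-1}(1 + o(1))$ uniformly, so the code contribution to the moment matches the uniform case and tends to $1$. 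The task reduces to bounding the total non-code contribution by $o(1)$.

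The main obstacle is a quantitative Hal\'asz--Littlewood--Offord-type anti-concentration estimate sharp at the threshold $\al_n = \Theta(\log n / n)$. For a non-code $F$ failing the code condition by a ``deficit'' $d$ (relative to some proper subgroup $H' \subsetneq G$), one needs a per-column bound of roughly the form
\[
\bP(F v_j = 0) \;\le\; |G|^{-1} \exp\bigl(-c\, \al_n d\bigr)
\]
with $c > 0$ absolute. Multiplying over the $n$ independent columns produces a total saving of $\exp(-c \al_n n d)$, which under the hypothesis $\al_n \geq \frac{\Delta \log n}{n}$ is $n^{-c \Delta d}$. Provided $\Delta$ is taken sufficiently large (depending on $|G|$), this saving beats the combinatorial count of non-codes at deficit $d$ (crudely, $|G|^n \binom{n}{d}$ times factors accounting for the choice of $H'$), so summing over $d$ and over proper subgroups yields an $o(1)$ total contribution. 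Since $\Delta$ is allowed to be arbitrarily large by hypothesis, this applies to every $G$. The delicate point --- and the heart of the proof --- is calibrating the anti-concentration estimate so that its $\al_n$-gain exactly overpowers the entropy of non-codes at the $\log n / n$ scale, uniformly across the additional structure coming from the $p$-adic filtration of $G$; once this is in place, the moment method delivers the theorem.
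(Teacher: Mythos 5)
This theorem is not proved in the paper; it is quoted verbatim as \cite[Theorem 4.1]{NW22}, and the paper uses it only as background motivation. Your proposal is a faithful high-level reconstruction of the Wood/Nguyen--Wood strategy behind that cited result: reduce to surjective moments, split surjections $F\colon\zp^n\to G$ into codes and non-codes, handle codes by a character-sum estimate showing $\bP(Fv_j=0)=|G|^{-1}(1+o(1))$, and beat the entropy of non-codes by an anti-concentration saving of the form $\exp(-c\al_n n d)$, which under the hypothesis $\al_n\ge \Delta\log n/n$ for every fixed $\Delta$ dominates the $n^{O(d)}$-type count. The one place you are somewhat loose is the per-column bound $\bP(Fv_j=0)\le |G|^{-1}\exp(-c\al_n d)$: in the actual argument the gain is not a single global deficit parameter but is organized through a refined stratification keyed to the $p$-adic filtration of $G$ (depths, relevant sublattices, and the corresponding subgroups $H'$), and the counting of non-codes must respect that stratification; you flag this yourself as the delicate point, which is the right instinct.
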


By reduction modulo $p$, one obtains the following corollary for random matrices over $\fp$.

\begin{thm} \label{thm1b}
Let $(\al_n)_{n \ge 1}$ be a sequence of positive real numbers such that for every constant $\Delta > 0$, we have $\al_n \geq \frac{\Delta \log n}{n}$ for all sufficiently large $n$. Let $A(n)$ be an $\al_n$-balanced random matrix in $\M_n(\fp)$ for each $n \ge 1$. Then for every nonnegative integer $k$,
\begin{equation*}
\lim_{n \to \infty} \bP(\cok(A(n)) \cong \fp^k)
= \lim_{n \to \infty} \bP(\rank(A(n)) = n-k)
= p^{-k^2} \frac{\prod_{j=1}^{\infty} (1-p^{-j})}{\prod_{j=1}^{k} (1-p^{-j})^2}.
\end{equation*}
\end{thm}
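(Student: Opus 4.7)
The plan is to deduce Theorem~\ref{thm1b} from Theorem~\ref{thm1a} by lifting the random $\fp$-matrix to a random $\zp$-matrix. Given an $\al_n$-balanced $A(n) \in \M_n(\fp)$, define $\tilde A(n) \in \M_n(\zp)$ by applying the canonical set-theoretic lift $\fp \to \{0, 1, \dots, p-1\} \sub \zp$ to each entry. The entries of $\tilde A(n)$ are independent and each reduces modulo $p$ to an $\al_n$-balanced element of $\fp$, so by definition $\tilde A(n)$ is $\al_n$-balanced and Theorem~\ref{thm1a} applies to it.

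Since $\tilde A(n) \bmod p = A(n)$, right exactness of the tensor product yields $\cok(A(n)) \cong \cok(\tilde A(n))/p\cok(\tilde A(n))$. For any finitely generated $\zp$-module $M$, the $\fp$-dimension of $M/pM$ equals the minimal number of generators of $M$; write $d(M)$ for this quantity. The event $\{\cok(A(n)) \cong \fp^k\}$ therefore decomposes as
\begin{equation*}
\bigsqcup_{\substack{H \text{ finite}\\ d(H) = k}} \{\cok(\tilde A(n)) \cong H\} \;\sqcup\; \{\cok(\tilde A(n)) \text{ is infinite with } d(\cdot) = k\}.
\end{equation*}

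By Theorem~\ref{thm1a}, $\lim_n \bP(\cok(\tilde A(n)) \cong H) = |\Aut(H)|^{-1}\prod_{i \ge 1}(1-p^{-i})$ for each finite $H$. Summing these limits over all finite abelian $p$-groups yields $1$, by the classical Hall identity $\sum_H |\Aut(H)|^{-1} = \prod_{i \ge 1}(1-p^{-i})^{-1}$, which forces $\bP(\cok(\tilde A(n)) \text{ infinite}) \to 0$. A dominated-convergence argument (using the pointwise convergence together with the trivial bound that any partial sum of probabilities is $\le 1$) then justifies interchanging the limit with the countable sum, giving
\begin{equation*}
\lim_{n \to \infty}\bP(\cok(A(n)) \cong \fp^k) \;=\; \Bigl(\prod_{i=1}^\infty (1-p^{-i})\Bigr) \sum_{d(H) = k} \frac{1}{|\Aut(H)|}.
\end{equation*}

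It remains to evaluate the sum. The identity $\sum_{d(H) = k} |\Aut(H)|^{-1} = p^{-k^2} \prod_{j=1}^k (1-p^{-j})^{-2}$ is a standard Hall--Cohen--Lenstra computation, obtained by parametrizing such $H$ via partitions $\ld = (\ld_1 \ge \cdots \ge \ld_k \ge 1)$ and applying Hall's explicit formula for $|\Aut(H_\ld)|$; substituting produces the closed form in the theorem. The second equality $\bP(\cok(A(n)) \cong \fp^k) = \bP(\rank(A(n)) = n-k)$ is immediate because cokernels of matrices over a field are vector spaces. The only nontrivial step of the argument is the limit/sum interchange; the combinatorial identity is classical, and no new probabilistic input is required beyond Theorem~\ref{thm1a}.
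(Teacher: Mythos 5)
Your proposal is correct and is exactly the argument the paper compresses into the remark ``by reduction modulo $p$'': lift to $\zp$, apply Theorem~\ref{thm1a}, partition $\{\cok(A(n)) \cong \fp^k\}$ according to the isomorphism type of $\cok(\tilde{A}(n))$, and sum via the Hall identity $\sum_{d(H)=k}|\Aut(H)|^{-1} = p^{-k^2}\prod_{j=1}^k(1-p^{-j})^{-2}$. One cosmetic quibble: the limit/sum interchange is not literally dominated convergence, but since the limiting probabilities sum to $1$ while each partial sum is bounded by $1$, a Fatou--Scheff\'e argument gives the required convergence, so the step is standard and the argument stands.
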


In the above results, the limiting distribution of $\cok(A(n))$ converges to the same law, independent of how the entries of $A(n)$ are distributed. These universality results have been extended to the distribution of the cokernels of various types of random $p$-adic matrices:  
Wood \cite{Woo17} proved the result for symmetric matrices, Nguyen and Wood \cite{NW22} for alternating matrices, and the author \cite{Lee23} for Hermitian matrices.
Further universality results have also been obtained, including for the joint distributions of multiple cokernels \cite{CY23, HNVP25, Lee24, NVP24}, as well as for random matrices over finite fields \cite{LMN21, Map10, She25}. See also \cite{Hod24, KLY24, Yan25} for various universality results concerning cokernels of random $p$-adic matrices.

It is natural to ask whether the lower bound on $\al_n$ in Theorems \ref{thm1a} and \ref{thm1b} can be improved.
A simple computation shows that, in order for such universality results to hold, $\al_n$ should be larger than $\frac{\log n}{n}$.
Indeed, suppose that the entries of $A(n)$ are i.i.d. copies of a random element $\zeta$ in $\zp$ (or $\fp$) satisfying $\bP(\zeta = 0) = 1 - \frac{\log n}{n}$. Then the probability that the $j$-th column of $A(n)$ is zero is $(1 - \frac{\log n}{n})^n$ for each $1 \le j \le n$. Consequently, the probability that $A(n)$ has at least one zero column is
$$
1 - \left( 1 - \left( 1 - \frac{\log n}{n} \right)^n \right)^n,
$$
which converges to $1 - e^{-1}$ as $n \to \infty$. Thus, for universality to hold, $\al_n$ must be larger than $\frac{\log n}{n}$.

Based on the above observation, Wood \cite[Open Problem 3.3]{Woo22}, in her ICM lecture, posed the problem of lowering the bound on $\al_n$ in Theorems \ref{thm1a} and \ref{thm1b} as close as possible to $\frac{\log n}{n}$.
The best previously known bound required that, for every $\Delta > 0$, $\al_n \ge \frac{\Delta \log n}{n}$ for all sufficiently large $n$.
This leaves a substantial gap between the known universality regime and the conjectural threshold $\frac{\log n}{n}$.

The aim of this paper is to close this gap in the finite field setting, thereby resolving \cite[Open Problem 3.3]{Woo22} in the finite field case.

\begin{thm} \label{thm1_main}
Let $c>1$ be a constant, $\al_n = \frac{c \log n}{n}$ and $A(n)$ be an $\al_n$-balanced random matrix in $\M_n(\fp)$ for each $n \ge 1$. Then for every nonnegative integer $k$,
\begin{equation} \label{eq1_main1}
\lim_{n \to \infty} \bP(\cok(A(n)) \cong \fp^k) 
= \lim_{n \to \infty} \bP(\rank(A(n)) = n-k)
= p^{-k^2} \frac{\prod_{j=1}^{\infty} (1-p^{-j})}{\prod_{j=1}^{k} (1-p^{-j})^2}.
\end{equation}
\end{thm}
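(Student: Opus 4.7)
The plan is to use the surjective moments method. Since $\cok(A(n))$ is an $\fp$-vector space completely determined by its rank, a short linear-algebra inversion shows that \eqref{eq1_main1} is equivalent to
\begin{equation*}
\lim_{n \to \infty} E\bigl[\#\Sur(\cok(A(n)), \fp^k)\bigr] = 1 \quad \text{for every } k \ge 1,
\end{equation*}
which is the value of the same moment in the uniform case. Expanding via linearity and using independence of the columns $v_1, \dots, v_n$ of $A(n)$,
\begin{equation*}
E\bigl[\#\Sur(\cok(A(n)), \fp^k)\bigr] = \sum_{F \in \Sur(\fp^n, \fp^k)} \prod_{j=1}^{n} \bP(F v_j = 0),
\end{equation*}
so the task reduces to estimating this sum.

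I would partition the sum by the column profile of $F$: writing $F$ as a $k \times n$ matrix with columns $F_{(i)} \in \fp^k$, set $c_F(a) = \#\{i : F_{(i)} = a\}$. A Fourier computation on $\fp$ shows that when $c_F$ is close to the uniform profile $n/p^k$, one has $\bP(Fv = 0) = p^{-k}(1 + o(1))$, and these generic $F$ reproduce the target main term of size $1$. The real difficulty lies with $F$ having many zero columns — say $c_F(0) = n - s$ with $s$ of order $\log n$ or smaller — since the number of such $F$ is roughly $\binom{n}{s} p^{ks}$, while the crude bound $\bP(Fv = 0) \le 1 - \al_n$ gives only $\bP(FA(n) = 0) \le (1-\al_n)^n \le n^{-c}$, which is too weak by a factor $p^{ks}$ to render the combinatorial sum negligible.

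The main obstacle is thus a sharp anti-concentration estimate for $\bP(Fv = 0)$ at the threshold $\al_n = c \log n / n$. I would condition on the random support $T = \{i : v_i \ne 0\}$, which by Chernoff concentrates around $c \log n$, and set $S(F) = \{i : F_{(i)} \ne 0\}$. Decomposing the event $Fv = 0$ according to whether $T \cap S(F)$ is empty, one should obtain a hybrid bound roughly of the form
\begin{equation*}
\bP(Fv = 0) \;\le\; (1-\al_n)^{|S(F)|} + p^{-k}(1 + o(1)),
\end{equation*}
the second term coming from an Odlyzko/Halász-type estimate applied to the restriction $F|_{T \cap S(F)}$ once the intersection is nonempty and spans $\fp^k$. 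Raising to the $n$-th power and summing over profiles, the small-$s$ contribution is controlled by $\sum_s (p^k n (1-\al_n)^n)^s \le \sum_s (p^k n^{1-c})^s$, which tends to $0$ precisely because $c > 1$ — this being exactly the coupon-collector threshold for $A(n)$ to have no zero column — while the large-$s$ contribution reproduces the uniform-case total $\to 1$. The hardest step is making the Odlyzko-type bound uniform in the combinatorial structure of $F$ (the profile $c_F$ and the support $S(F)$), especially in the intermediate regime where $|T \cap S(F)|$ is small but nonzero; pushing the analysis through to the sharp constant $c > 1$, rather than the weaker $c \to \infty$ regime covered by Theorem \ref{thm1b}, is what the argument must ultimately accomplish.
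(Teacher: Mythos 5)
Your overall framework---reduce to the surjective moment $\lim_n E[\#\Sur(\cok(A(n)),\fp^k)]=1$, expand as $\sum_{F\in\Sur(\fp^n,\fp^k)}\prod_{l}\bP(Fv_l=0)$, Fourier-analyze $\bP(Fv_l=0)$, stratify by the column profile of $F$, and recognize $c>1$ as the coupon-collector threshold controlling $F$ with few nonzero columns---is exactly the framework of the paper (Section \ref{Sub21}, and Theorems \ref{thm21_main2a}--\ref{thm21_main2b} for the small-profile range). But you have misidentified where the real difficulty lies, and the specific bound you propose to carry the argument is false.

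The proposed hybrid bound $\bP(Fv=0)\le(1-\al_n)^{|S(F)|}+p^{-k}(1+o(1))$ fails whenever the columns $\{F_{(i)}:i\in T\cap S(F)\}$ span only a proper subspace $V\subsetneq\fp^k$: in that event $Fv$ lands in $V$ automatically, and the conditional probability of $Fv=0$ is of order $p^{-\dim V}$, not $p^{-k}$. You flag ``once the intersection is nonempty and spans $\fp^k$,'' but a surjective $F$ can easily have almost all of its $\Theta(n)$ nonzero columns concentrated in a proper subspace $V$, with only $O(1)$ or $O(n/\log n)$ columns outside $V$ to guarantee surjectivity; for such $F$ neither branch of your hybrid bound is usable, and the number of such $F$ is far too large for crude estimates. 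Dually, after the paper's Fourier expansion in $(g_k,h_l)$, this is the regime where the number of pairs $(a,b)$ with $g_{k_a}\cdot h_{l_b}\ne 0$ is small even though both $i$ and $j$ are of order $n$, because the popular $g$-values sit in $V$ and the popular $h$-values sit in $V^{\perp}$. That is precisely the content of the paper's cases $\mc{C}_1$, $\mc{C}_2$, $\mc{C}_3$ in Sections \ref{Sub23}--\ref{Sub24} (and the further split of $\mc{C}_3$ into $\mc{C}_{3,a},\mc{C}_{3,b},\mc{C}_{3,c}$), and it occupies most of the proof. Your sketch also omits the reduction-to-extreme-points step (Lemma \ref{lem2a}), which fixes each entry's law to a two-point distribution supported on $\{0,t_{kl}\}$; the subsequent delicate computations (e.g.\ the identity $\sum_{g\in V}\sin^2(\pi t(g\cdot h)/p)=|V|/2$ used in Proposition \ref{prop_case3c}) rely on that normalization. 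In short: right method, right threshold heuristic, but the orthogonal-subspace obstruction---which is the genuinely new difficulty at $c>1$ as opposed to $c\to\infty$---is missing, and the anti-concentration inequality you propose in its place does not hold.
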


By \cite[Theorem 1]{FK06} (see also \cite[Theorem 2.3]{Woo22}), the limiting distribution of $\cok(A(n))$ is determined by its limiting surjective moments. Thus, Theorem \ref{thm1_main} follows from the next theorem. We write $\Hom(A,B)$ (resp. $\Sur(A,B)$) for the set of all homomorphisms (resp. surjective homomorphisms) from $A$ to $B$.

\begin{thm} \label{thm1_main2}
Let $c>1$ be a constant, $\al_n = \frac{c \log n}{n}$ and $A(n)$ be an $\al_n$-balanced random matrix in $\M_n(\fp)$ for each $n \ge 1$. Then for every nonnegative integer $r$, 
\begin{equation} \label{eq1_main2}
\lim_{n \to \infty} \bE(\# \Sur(\cok(A(n)), \fp^r)) = 1.
\end{equation}
\end{thm}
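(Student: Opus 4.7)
The plan is to estimate $\bE(\#\Sur(\cok(A(n)), \fp^r))$ via Fourier analysis on $\fp^r$, reorganize the sum by $r$-dimensional subspaces of $\fp^n$, and control the error by combining a pointwise Fourier bound with a careful counting of ``bad'' subspaces. By linearity of expectation and the independence of the columns $a_1, \ldots, a_n$ of $A(n)$,
$$
\bE(\#\Sur(\cok(A(n)), \fp^r)) = \sum_{F \in \Sur(\fp^n, \fp^r)} \prod_{j=1}^n \bP(F a_j = 0).
$$
Fourier inversion on $\fp^r$ yields $\bP(Fa_j = 0) = p^{-r}\sum_{v \in V_F}\prod_i \widehat{a_{ij}}(v_i)$, where $V_F := F^{T}(\fp^r) \sub \fp^n$ is the $r$-dimensional row space of $F$ and $\widehat{a_{ij}}(u) := \bE(e^{2\pi i u a_{ij}/p})$. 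Since each $r$-dimensional $V$ arises as $V_F$ for exactly $|\GL_r(\fp)|$ surjections,
$$
\bE(\#\Sur(\cok(A(n)), \fp^r)) = p^{-rn} |\GL_r(\fp)| \sum_V \prod_{j=1}^n g_j(V), \quad g_j(V) := \sum_{v \in V} \prod_i \widehat{a_{ij}}(v_i),
$$
summed over $r$-dimensional $V \sub \fp^n$. The $v = 0$ contribution gives the main term $p^{-rn}|\GL_r(\fp)| \binom{n}{r}_p = \prod_{i=0}^{r-1}(1 - p^{i-n}) \to 1$, so it suffices to prove that $\sum_V (\prod_{j=1}^n g_j(V) - 1) = o(\binom{n}{r}_p)$.

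Using the standard Fourier bound $|\widehat{a_{ij}}(u)| \le 1 - c_p \alpha_n$ for $u \ne 0$ (with $c_p > 0$ depending only on $p$) and writing $g_j(V) = 1 + h_j(V)$, one obtains $|h_j(V)| \le \sum_{v \in V \setminus 0}(1 - c_p\alpha_n)^{|S_v|}$, where $|S_v|$ is the Hamming weight. I would stratify subspaces by the minimum weight $d(V) := \min_{0 \ne v \in V}|S_v|$. For $V$ with $d(V) \ge D_0 \approx (1+\epsilon)n/(c_p c)$, the bound $(1 - c_p\alpha_n)^{D_0} = o(n^{-1})$ forces $|h_j(V)| = o(n^{-1})$ uniformly in $j$, giving $\prod_j g_j(V) = 1 + o(1)$ with total contribution $o(\binom{n}{r}_p)$. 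For ``bad'' $V$ with $d(V) < D_0$, a double-count gives $\#\{V: d(V) < D_0\} \le \binom{n-1}{r-1}_p \cdot \#\{0 \ne v : |S_v| < D_0\}$, and Chernoff large-deviation bounds control the latter provided $D_0$ lies well below the typical weight $n(p-1)/p$.

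The main obstacle is achieving the \emph{sharp} threshold $c > 1$ uniformly in $p$. The naive Fourier constant $c_p = 1 - \cos(2\pi/p)$ (coming from two-point distributions concentrated on $\{0, 1\}$) decays like $p^{-2}$ for large $p$, so the scheme above only directly gives universality when $c > p/((p-1)c_p)$, a threshold that blows up with $p$. To close the gap to $c > 1$, one must sharpen the bound on $\prod_j g_j(V)$ for bad $V$ by exploiting the \emph{full} subspace $W \sub V$ generated by low-weight codewords, rather than a single minimum-weight vector: since $V^\perp \sub W^\perp$ has codimension $\dim W$, a refined Fourier analysis gives $\bP(a_j \in V^\perp) \le p^{-\dim W}(1 + o(1))$, which when raised to the $n$-th power combines favorably with the Chernoff count of low-weight vectors. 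Alternatively, one can argue inductively on $r$: isolate a low-weight codeword $v^*$ of $V$, factor the Fourier sum along the coordinate $v^* \cdot a_j$, and reduce the estimate for bad $V$ to a statement of the same shape with $r$ replaced by $r-1$, applied to a residual random matrix that is still $\alpha_n$-balanced. Carrying out either refinement in concert with the counting of low-weight vectors yields $\sum_V(\prod_j g_j(V) - 1) = o(\binom{n}{r}_p)$ for every $c > 1$, proving $\bE(\#\Sur(\cok(A(n)), \fp^r)) \to 1$.
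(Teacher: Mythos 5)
Your opening steps match the paper's: you expand $\bE(\#\Sur(\cok A(n),\fp^r))$ as a sum over surjections, apply Fourier inversion on $\fp^r$ columnwise, and identify the trivial character with the main term $\prod_{i=0}^{r-1}(1-p^{i-n})\to 1$. After that your route diverges: you reorganize by the $r$-dimensional row space $V\subset\fp^n$ and stratify by the minimum Hamming weight $d(V)$, whereas the paper first applies an extreme-point reduction (replacing each entry's distribution by the two-point law $0$ with probability $1-\al_n$, a fixed $t_{kl}\in\fp^*$ with probability $\al_n$, exploiting that the Fourier-side quantity is affine in each entry's law on a compact polytope) and then stratifies by the number $i$ of nonzero $g_k$'s and $j$ of nonzero $h_l$'s, before passing to the ``popular-value'' sets $A,B$ and the trichotomy $\mc{C}_1,\mc{C}_2,\mc{C}_3$. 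These are genuinely different decompositions of the same error term.

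The real problem is that your argument does not close. You correctly diagnose that the naive pointwise Fourier constant $c_p\sim p^{-2}$ produces a $p$-dependent threshold, and you then offer two speculative repairs, but neither is correct as stated. The bound $\bP(a_j\in V^\perp)\le p^{-\dim W}(1+o(1))$ is false for $\al_n$-balanced columns: with $W$ spanned by low-weight codewords, $W^\perp$ contains the event ``$a_{ij}=0$ for all $i$ in the union of supports,'' whose probability is at least $(1-\al_n)^{|\cup\,\mathrm{supp}|}$, which tends to $1$ (not $p^{-\dim W}$) whenever that union is $o(n/\log n)$ --- precisely the bad-$V$ regime you are trying to control. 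The inductive suggestion is not developed far enough to evaluate. The paper's resolution of this same difficulty is to use the \emph{averaged} Fourier bound $\sum_{h\in G^*}\sin^2(\pi m/p)=p^r/2$ in the regime where $i$ or $j$ is $O(n/\log n)$ (Theorem~\ref{thm21_main2a}), which removes the $p$-dependence entirely, and to reserve the crude $1-8c_1/p^2\cdot\log n/n$ bound for the regime where $i,j=\Theta(n)$, where the exponent $ij=\Theta(n^2)$ dwarfs any $p$-dependent loss; the remaining ``structured'' cases ($g\cdot h\equiv 0$ on $A\times B$) are handled by an entropy count of how many sign patterns can produce them. None of these three mechanisms appears in your sketch, so the central estimate $\sum_V(\prod_j g_j(V)-1)=o(\binom{n}{r}_p)$ for all $c>1$ remains unproven.

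A secondary point: by skipping the extreme-point reduction you are forced to carry arbitrary balanced laws through the whole argument. This is not a logical error, but the reduction is what lets the paper write the error as an explicit function of a single matrix $t\in(\fp^*)^{[n]\times[n]}$ and makes the later combinatorics tractable; any successful version of your approach would likely want something equivalent.
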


The paper is organized as follows.
In Section \ref{Sub21}, we reduce Theorem \ref{thm1_main2} to the special case in which each entry of $A(n)$ equals $0$ with probability $1-\al_n$ and equals a fixed element in $\fp^*$ (which may vary across entries) with probability $\al_n$. This reduces Theorem \ref{thm1_main2} to proving that
$$
F(t) = \frac{1}{p^{rn}}\sum_{i=r}^{n} \sum_{j=1}^{n} \sum_{\substack{1 \le k_1 < \cdots < k_i \le n \\ 1 \le l_1 < \cdots < l_j \le n}} \sum_{\substack{g_{k_1}, \ldots, g_{k_i} \in (\fp^r)^* \\ \left< g_{k_1}, \ldots, g_{k_i} \right> = \fp^r}}
\sum_{h_{l_1}, \ldots, h_{l_j} \in (\fp^r)^*} \left | \prod_{a=1}^{i} \prod_{b=1}^{j} (1-\al_n + \al_n \zeta^{t_{k_a l_b}(g_{k_a} \cdot h_{l_b})}) \right |
$$
converges to $0$ as $n \to \infty$, where $t_{kl} \in \fp^*$ for all $1 \le k,l \le n$.

To estimate the complicated sum $F(t)$, we decompose it into partial sums according to the values of $i$ and $j$. In Section \ref{Sub22}, we prove that the contribution from the range where either $i$ or $j$ is small converges to $0$ as $n \to \infty$ (Theorems \ref{thm21_main2a} and \ref{thm21_main2b}). 
The main difficulty lies in estimating the contribution from the range in which both $i$ and $j$ are large (Theorem \ref{thm21_main2c}). Even in this range, the quantity
$$
N := \{ (a,b) : 1 \le a \le i, 1 \le b \le j \text{ and } g_{k_a} \cdot h_{l_b} \ne 0 \}
$$
may not be large enough.

We overcome this issue by introducing the sets $A = A_{g_{k_1}, \ldots, g_{k_i}}$ and $B = B_{h_{l_1}, \ldots, h_{l_j}}$ consisting of the popular values among $\{ g_{k_a} \}_{a=1}^{i}$ and $\{ h_{l_b} \}_{b=1}^{j}$, respectively (see Section \ref{Sub23} for the precise definition).
According to the structure of $A$ and $B$, we divide the case in which both $i$ and $j$ are large into three subcases:
\begin{enumerate}
    \item[($\mc{C}_1$)] $g \cdot h \ne 0$ for some $(g,h) \in A \times B$;
    \item[($\mc{C}_2$)] $A \subseteq V \setminus \{0 \}$ and $B \subseteq V^{\perp} \setminus \{0\}$ for some $\fp$-subspace $V$ of $\fp^r$, and at least one inclusion is strict;
    \item[($\mc{C}_3$)] $A = V \setminus \{0 \}$ and $B = V^{\perp} \setminus \{0\}$ for some $\fp$-subspace $V$ of $\fp^r$.
\end{enumerate}
We prove that the contributions from the cases $\mc{C}_1$ and $\mc{C}_2$ converge to $0$ as $n \to \infty$ in Section \ref{Sub23}. In Section \ref{Sub24}, we further subdivide the case $\mc{C}_3$ into three subcases $\mc{C}_{3,a}$, $\mc{C}_{3,b}$ and $\mc{C}_{3,c}$, and prove that the contribution of each subcase converges to $0$ as $n \to \infty$.

\section{Proof of main theorem} \label{Sec2}

We begin by introducing some notation. 
For a positive integer $n$, denote $[n] := \{ 1, 2, \ldots, n \}$.
For any set $S$, write $S^{[n] \times [n]} := \{ (s_{kl})_{k,l \in [n]} : s_{kl} \in S \}$. 
Since Theorem \ref{thm1_main2} is trivial when $r=0$, it suffices to consider the case when $r > 0$. Fix a positive integer $r$ and write $G := \fp^r$ and $G^* := G \setminus \{ 0 \}$. For $g_1, \ldots, g_k \in G$, let $\left< g_1, \ldots, g_k \right>$ be the $\fp$-subspace of $G$ generated by $g_1, \ldots, g_k$.

Let $c>1$ be a constant, $\al_n = \frac{c \log n}{n}$ and $A(n)$ be an $\al_n$-balanced random matrix in $\M_n(\fp)$ for each $n \ge 1$. We denote by $\mr{col}(A(n))$ the column space of $A(n)$, by $A(n)_l$ the $l$-th column of $A(n)$ and by $A(n)_{kl}$ the $(k,l)$ entry of $A(n)$. We write $\bP$ for probability and $\bE$ for expected value. Finally, let $\ze := e^{\frac{2 \pi i}{p}} \in \bC$.

\subsection{Reduction to extreme points} \label{Sub21}

By a direct computation as in \cite[Section 2]{Woo19}, we obtain
\begin{align*}
\bE(\# \Sur(\cok(A(n)), G)) 
& = \sum_{F \in \Sur(\fp^n, G)} \bP(F(\mr{col}(A(n)))=0) \\
& = \sum_{F \in \Sur(\fp^n, G)} \prod_{l=1}^{n} \bP(FA(n)_l=0) \\
& = \sum_{F \in \Sur(\fp^n, G)} \prod_{l=1}^{n} \frac{1}{|G|}\sum_{C_l \in \Hom(G, \fp)} \bE(\ze^{C_l(FA(n)_l)}) \\
& = \frac{1}{p^{rn}} \sum_{F \in \Sur(\fp^n, G)} \prod_{l=1}^{n} \sum_{h_l \in G} \bE(\ze^{FA(n)_l \cdot h_l}) \\
& = \frac{1}{p^{rn}} \sum_{\substack{g_1, \ldots, g_n \in G \\ \left< g_1, \ldots, g_n \right> = G}} \prod_{l=1}^{n} \sum_{h_l \in G} \prod_{k=1}^{n} \bE(\ze^{A(n)_{kl} (g_k \cdot h_l)}) \\
& = \frac{1}{p^{rn}} \sum_{\substack{g_1, \ldots, g_n \in G \\ \left< g_1, \ldots, g_n \right> = G}} \sum_{h_1, \ldots, h_n \in G} \prod_{k=1}^{n} \prod_{l=1}^{n} \bE(\ze^{A(n)_{kl} (g_k \cdot h_l)}).
\end{align*}
In the above equation, the third equality uses the discrete Fourier transform on $G$, the fourth follows by writing each $C_l \in \Hom(G, \fp)$ as $C_l(g)=g \cdot h_l$ for some $h_l \in G$, and the fifth follows from writing $F(e_k)=g_k$ where $e_1, \ldots, e_n$ is the standard basis of $\fp^n$.

Since the contribution of the tuple $(h_1, \ldots, h_n)=(0, \ldots, 0)$ is
$$
\frac{1}{p^{rn}} \sum_{\substack{g_1, \ldots, g_n \in G \\ \left< g_1, \ldots, g_n \right> = G}} 1
= \frac{1}{p^{rn}} \prod_{j=0}^{r-1} (p^n-p^j)
= \prod_{j=0}^{r-1} \left ( 1 - \frac{1}{p^{n-j}} \right )
$$
which converges to $1$ as $n \to \infty$, it follows that \eqref{eq1_main2} holds if and only if
\begin{align*}
& \frac{1}{p^{rn}} \sum_{\substack{g_1, \ldots, g_n \in G \\ \left< g_1, \ldots, g_n \right> = G}} \sum_{\substack{h_1, \ldots, h_n \in G \\ (h_1, \ldots, h_n) \ne (0, \ldots, 0) }} \prod_{k=1}^{n} \prod_{l=1}^{n} \bE(\ze^{A(n)_{kl} (g_k \cdot h_l)}) \\
= \, & \frac{1}{p^{rn}} \sum_{i=r}^{n} \sum_{j=1}^{n} \sum_{\substack{1 \le k_1 < \cdots < k_i \le n \\ 1 \le l_1 < \cdots < l_j \le n}} \sum_{\substack{g_{k_1}, \ldots, g_{k_i} \in G^* \\ \left< g_{k_1}, \ldots, g_{k_i} \right> = G}}
\sum_{h_{l_1}, \ldots, h_{l_j} \in G^*} \prod_{a=1}^{i} \prod_{b=1}^{j} \bE(\ze^{A(n)_{k_a l_b} (g_{k_a} \cdot h_{l_b})})
\end{align*}
converges to $0$ as $n \to \infty$. The equality holds since $\bE(\ze^{A(n)_{kl} (g_k \cdot h_l)})=1$ whenever $g_k=0$ or $h_l=0$, and the condition $\left< g_1, \ldots, g_n \right> = G$ (resp. $(h_1, \ldots, h_n) \ne (0, \ldots, 0)$) implies that $i \ge r$ (resp. $j \ge 1$).

Let $\mr{Ran}(\fp, \al_n)$ be the set of all $\al_n$-balanced random elements in $\fp$. Then $A(n)_{kl}$ is an element of $\mr{Ran}(\fp, \al_n)$ for all $k,l \in [n]$. 
Define a function $f_{p,r} : \mr{Ran}(\fp, \al_n)^{[n] \times [n]} \to \bC$ by
\begin{align*}
f_{p,r}((x_{kl})) := \frac{1}{p^{rn}}\sum_{i=r}^{n} \sum_{j=1}^{n} \sum_{\substack{1 \le k_1 < \cdots < k_i \le n \\ 1 \le l_1 < \cdots < l_j \le n}} \sum_{\substack{g_{k_1}, \ldots, g_{k_i} \in G^* \\ \left< g_{k_1}, \ldots, g_{k_i} \right> = G}}
\sum_{h_{l_1}, \ldots, h_{l_j} \in G^*} \prod_{a=1}^{i} \prod_{b=1}^{j} \bE(\ze^{x_{k_a l_b} (g_{k_a} \cdot h_{l_b})})
\end{align*}
and a function $F_{p,r} : \mr{Ran}(\fp, \al_n)^{[n] \times [n]} \to \bR$ by
$$
F_{p,r}((x_{kl})) := \frac{1}{p^{rn}}\sum_{i=r}^{n} \sum_{j=1}^{n} \sum_{\substack{1 \le k_1 < \cdots < k_i \le n \\ 1 \le l_1 < \cdots < l_j \le n}} \sum_{\substack{g_{k_1}, \ldots, g_{k_i} \in G^* \\ \left< g_{k_1}, \ldots, g_{k_i} \right> = G}}
\sum_{h_{l_1}, \ldots, h_{l_j} \in G^*} \left | \prod_{a=1}^{i} \prod_{b=1}^{j} \bE(\ze^{x_{k_a l_b} (g_{k_a} \cdot h_{l_b})}) \right |.
$$
As explained above, Theorem \ref{thm1_main2} is equivalent to requiring that
\begin{equation} \label{eq21_func}
\lim_{n \to \infty} f_{p,r}((x_{kl})) = 0
\end{equation}
for every positive integer $r$ and every $(x_{kl}) \in \mr{Ran}(\fp, \al_n)^{[n] \times [n]}$.

For each $t = (t_{kl}) \in (\fp^*)^{[n] \times [n]}$, let $X(t) = (x(t)_{kl})$ be the element of $\mr{Ran}(\fp, \al_n)^{[n] \times [n]}$ defined by
$$
\bP(x(t)_{kl}=r) = \begin{cases}
1-\al_n & \text{if } r=0, \\
\al_n & \text{if } r=t_{kl}, \\
0 & \text{otherwise}
\end{cases}
$$
for every $k,l \in [n]$. The following lemma reduces Theorem \ref{thm1_main2} to random matrices $A(n)$ whose $(k,l)$ entry equals $0$ with probability $1-\al_n$ and equals a fixed value $t_{kl} \in \fp^*$ with probability $\al_n$.

\begin{lem} \label{lem2a}
For every $(x_{kl}) \in \mr{Ran}(\fp, \al_n)^{[n] \times [n]}$, there exists $t = (t_{kl}) \in (\fp^*)^{[n] \times [n]}$ such that 
$$
\lt f_{p,r}((x_{kl})) \rt \le F_{p,r}(X(t)).
$$
\end{lem}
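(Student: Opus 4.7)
The strategy is to apply the triangle inequality to $f_{p,r}$, decompose each entry distribution into two-point ``extreme'' distributions via the Krein--Milman theorem, and then extract the desired deterministic $t$ by a probabilistic averaging argument.

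After applying the triangle inequality to pass the absolute value inside the outer sum, it suffices to bound each factor $|\bE(\ze^{x_{kl} m})|$ appearing in the product, where $m = g_{k_a} \cdot h_{l_b} \in \fp$. I would exploit the polytope structure of $\al_n$-balanced distributions on $\fp$: for $n$ large enough that $\al_n \le \tfrac{1}{2}$, the extreme points of
$$P_{\al_n} := \{ (p_r)_{r \in \fp} : p_r \ge 0,\ \textstyle\sum_r p_r = 1,\ p_r \le 1 - \al_n\ \forall r \}$$
are precisely the two-point distributions $D_{a,b}$ (with $a \neq b$ in $\fp$) placing mass $1 - \al_n$ on $a$ and $\al_n$ on $b$. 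Writing the distribution of $x_{kl}$ as a convex combination $\sum_{a \neq b} \mu^{kl}_{a,b} D_{a,b}$ and using $|\ze^{am}| = 1$, the triangle inequality yields the key pointwise bound
$$|\bE(\ze^{x_{kl} m})| \,\le\, \sum_{t \in \fp^*} \nu^{kl}_t \, |1 - \al_n + \al_n \ze^{tm}|, \qquad \nu^{kl}_t := \sum_{a \in \fp} \mu^{kl}_{a, a+t},$$
where $\nu^{kl} = (\nu^{kl}_t)_{t \in \fp^*}$ is a probability distribution on $\fp^*$ depending only on the distribution of $x_{kl}$.

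To finish, I would introduce a random matrix $T = (T_{kl}) \in (\fp^*)^{[n] \times [n]}$ with independent entries $T_{kl} \sim \nu^{kl}$. Since $k_1 < \cdots < k_i$ and $l_1 < \cdots < l_j$ force the positions $(k_a, l_b)$ to be distinct as $(a,b)$ ranges over $[i] \times [j]$, the entries $T_{k_a l_b}$ appearing in any fixed outer summand are mutually independent, so pulling $\bE_T$ through the product gives
$$\bE_T \, F_{p,r}(X(T)) = \frac{1}{p^{rn}} \sum_{\text{outer}} \prod_{a,b} \sum_{t \in \fp^*} \nu^{k_a l_b}_t \, |1 - \al_n + \al_n \ze^{t(g_{k_a} \cdot h_{l_b})}| \,\ge\, |f_{p,r}((x_{kl}))|,$$
where the inequality combines the triangle inequality with the pointwise bound from the previous step. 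The probabilistic method then produces a deterministic $t \in (\fp^*)^{[n] \times [n]}$ for which $F_{p,r}(X(t)) \ge |f_{p,r}((x_{kl}))|$, as required.

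I expect the factor-wise reduction to be the main obstacle: a naive bound of the form $|\bE(\ze^{x_{kl} m})| \le |1 - \al_n + \al_n \ze^{t_{kl} m}|$ for a \emph{single} $t_{kl}$ cannot hold uniformly in $m \in \fp^*$, because the right-hand side depends nontrivially on $m$ through $\cos(2 \pi t_{kl} m / p)$ (one can check this already for a distribution of the form $p_0 = 1-\al_n$, $p_2 = p_{-2} = \tfrac{\al_n}{2}$ on $\fp$ for small $p$). The resolution is to express $|\bE(\ze^{x_{kl} m})|$ as a convex \emph{average} over $t$, using the extreme-point characterization of $P_{\al_n}$, and then to convert that average into a deterministic choice of $t$ by integrating against the outer sum; it is the independence of the $T_{kl}$ together with the disjointness of the indices $(k_a, l_b)$ that allows the expectation to pass through the product in the final step.
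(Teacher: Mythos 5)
Your proof is correct, but it takes a genuinely different route from the paper's. The paper uses a \emph{deterministic extremal} argument: since $f_{p,r}$ is affine in each coordinate $x_{kl}$ (each position appears in at most one factor per summand), $|f_{p,r}|$ is coordinate-wise convex on the compact convex product $C_{p,\al_n}^{[n]\times[n]}$ and hence attains its supremum at a profile $(y_{kl})$ in which every $y_{kl}$ is an extreme point. At such a profile each entry is a two-point distribution with masses $1-\al_n$ at some $u_{kl}$ and $\al_n$ at some $v_{kl}$, so $\lt \bE(\ze^{x'_{kl}m})\rt = c((v_{kl}-u_{kl})m)$ \emph{exactly} for every $m$, and the triangle inequality applied at the extremal profile yields the claim directly. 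Your proof instead \emph{averages over} the extreme points rather than \emph{passing to} an extremal one: you decompose each $x_{kl}$ as a mixture $\sum_{a\neq b}\mu^{kl}_{a,b} D_{a,b}$, obtain the factor-wise bound $\lt \bE(\ze^{x_{kl}m})\rt \le \sum_{t\in\fp^*}\nu^{kl}_t\,c(tm)$, and then observe that the resulting expression is $\bE_T[F_{p,r}(X(T))]$ for $T$ with independent entries $T_{kl}\sim\nu^{kl}$ --- where the crucial (and correctly noted) structural fact is that the positions $(k_a,l_b)$ in a single product are pairwise distinct, so expectation passes through the product --- and you finish by the probabilistic method.

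Both arguments lean on exactly the same two structural features: that the extreme points of the $\al_n$-balanced polytope are two-point distributions (for $\al_n \le \tfrac12$), and that each matrix position appears at most once in any single product term of $f_{p,r}$ (which gives coordinate-wise affineness in the paper, and independence of the relevant $T_{k_a l_b}$ in yours). Your observation that a naive ``single $t_{kl}$ dominates uniformly in $m$'' bound fails is on point; the paper's resolution is to first move to the vertex, where the single-$t_{kl}$ bound becomes an equality, whereas yours is to keep the convex weights as an explicit probability measure $\nu^{kl}$ and average it out. The paper's version is a little shorter since it skips the randomization step, but yours is equally rigorous and arguably more self-explanatory about \emph{where} the required $t$ lives (in the support of $\bigotimes_{k,l}\nu^{kl}$).
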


\begin{proof}
Consider the set
$$
C_{p, \al_n} := \{ (t_0, \ldots, t_{p-1}) \in [0,1]^p : \sum_{i=0}^{p-1} t_i=1 \text{ and } t_i \le 1-\al_n \}.
$$
The map $C_{p, \al_n} \to \mr{Ran}(\fp, \al_n)$ sending $(t_0, \ldots, t_{p-1}) \in C_{p, \al_n}$ to the random element $y$ in $\fp$ satisfying $\bP(y=r)=t_r$ for each $r \in \fp$ is a bijection. Using this identification, one can regard $f_{p,r}$ as a function on $C_{p, \al_n}^{[n] \times [n]}$. 
As a function on $C_{p, \al_n}^{[n] \times [n]}$, $f_{p,r}$ is affine in each component. Since $C_{p, \al_n}$ is convex and compact, $\lt f_{p,r}((x_{kl})) \rt$ attains its maximum at some $(y_{kl}) \in C_{p, \al_n}^{[n] \times [n]}$, where each $y_{kl}$ is an extreme point of $C_{p, \al_n}$. 

An extreme point $y = (y_0, \ldots, y_{p-1}) \in C_{p, \al_n}$ satisfies $y_u=1-\al_n$ and $y_v = \al_n$ for two distinct elements $u,v \in \fp$. 
The corresponding random element $x'$ in $\mr{Ran}(\fp, \al_n)$ of $y$ is given by 
$$
\bP(x' = r) = \begin{cases}
1 - \al_n & \text{if } r=u, \\
\al_n & \text{if } r=v, \\
0 & \text{otherwise}.
\end{cases}
$$
Thus, there are distinct elements $u_{kl}, v_{kl} \in \fp$ for all $k,l \in [n]$ such that
$\lt f_{p,r}((x_{kl})) \rt \le \lt f_{p,r}((x'_{kl})) \rt$, where each random element $x'_{kl}$ in $\mr{Ran}(\fp, \al_n)$ is defined by
$$
\bP(x'_{kl} = r) = \begin{cases}
1 - \al_n & \text{if } r=u_{kl}, \\
\al_n & \text{if } r=v_{kl}, \\
0 & \text{otherwise}.
\end{cases}
$$

Since the absolute value
$$
\left | \prod_{a=1}^{i} \prod_{b=1}^{j} \bE(\ze^{x'_{k_a l_b} (g_{k_a} \cdot h_{l_b})}) \right |
$$
remains unchanged when $(u_{kl}, v_{kl})$ is replaced by $(0, v_{kl}-u_{kl})$, it follows that
$$
\lt f_{p,r}((x'_{kl})) \rt \le F_{p,r}((x'_{kl})) = F_{p,r}(X(t)),
$$
where $t = (t_{kl}) \in (\fp^*)^{[n] \times [n]}$ is defined by $t_{kl}=v_{kl}-u_{kl}$ for every $k,l \in [n]$.
\end{proof}

Write $F(t) := F_{p,r}(X(t))$ for simplicity. By Lemma \ref{lem2a}, it suffices to prove $\lim_{n \to \infty} F(t) = 0$ for every $t = (t_{kl}) \in (\fp^*)^{[n] \times [n]}$ to establish Theorem \ref{thm1_main2}. Write
$$
c(m) := \lt 1-\al_n+\al_n \zeta^m \rt
$$ 
for each $m \in \fp$. For a tuple $(g_{k_a}) = (g_{k_1}, \ldots, g_{k_i}) \in (G^*)^i$ and $h \in G$, define
$$
E_b^{t}((g_{k_a}), h) := \prod_{a=1}^{i} c(t_{k_a l_b}(g_{k_a} \cdot h)).
$$
For tuples $(g_{k_a}) = (g_{k_1}, \ldots, g_{k_i}) \in (G^*)^i$ and $(h_{l_b}) = (h_{l_1}, \ldots, h_{l_j}) \in (G^*)^j$, define
$$
E^{t}((g_{k_a}), (h_{l_b})) := \prod_{b=1}^{j} E_b^{t}((g_{k_a}), h_{l_b}) = \prod_{a=1}^{i} \prod_{b=1}^{j} c(t_{k_a l_b}(g_{k_a} \cdot h_{l_b})).
$$
Then
\begin{align*}
F(t) & = \frac{1}{p^{rn}}\sum_{i=r}^{n} \sum_{j=1}^{n} \sum_{\substack{1 \le k_1 < \cdots < k_i \le n \\ 1 \le l_1 < \cdots < l_j \le n}} \sum_{\substack{g_{k_1}, \ldots, g_{k_i} \in G^* \\ \left< g_{k_1}, \ldots, g_{k_i} \right> = G}}
\sum_{h_{l_1}, \ldots, h_{l_j} \in G^*} \left | \prod_{a=1}^{i} \prod_{b=1}^{j} (1-\al_n + \al_n \zeta^{t_{k_a l_b}(g_{k_a} \cdot h_{l_b})}) \right | \\
& = \frac{1}{p^{rn}}\sum_{i=r}^{n} \sum_{j=1}^{n} \sum_{\substack{1 \le k_1 < \cdots < k_i \le n \\ 1 \le l_1 < \cdots < l_j \le n}} \sum_{\substack{g_{k_1}, \ldots, g_{k_i} \in G^* \\ \left< g_{k_1}, \ldots, g_{k_i} \right> = G}}
\sum_{h_{l_1}, \ldots, h_{l_j} \in G^*} E^{t}((g_{k_a}), (h_{l_b})) \\
& = \frac{1}{p^{rn}}\sum_{i=r}^{n} \sum_{1 \le k_1 < \cdots < k_i \le n} \sum_{\substack{g_{k_1}, \ldots, g_{k_i} \in G^* \\ \left< g_{k_1}, \ldots, g_{k_i} \right> = G}} \left ( \prod_{l=1}^{n} \left ( 1 + \sum_{h_l \in G^*} E_b^{t}((g_{k_a}), h_{l}) \right ) - 1 \right ).
\end{align*}

Since $F(t)$ is a complicated sum involving the products $E^{t}((g_{k_a}), (h_{l_b}))$, 
its behavior depends heavily on the number of indices $i$, $j$ and the elements $g_{k_a}, h_{l_b} \in G^*$. In particular, the product $E^{t}((g_{k_a}), (h_{l_b}))$ becomes smaller as the number
$$
N := \{ (a,b) \in [i] \times [j] : g_{k_a} \cdot h_{l_b} \ne 0 \}
$$
increases. This motivates the decomposition of $F(t)$ into partial sums, each corresponding to a distinct range of $(i,j)$ which can then be estimated separately.

Write $[n]_k := \{ k, k+1, \ldots, n \}$ for $0 \le k \le n$. For $I,J \subseteq [n]_0$, let
$$
F^{I \times J}(t) := \frac{1}{p^{rn}}\sum_{i \in I} \sum_{j \in J} \sum_{\substack{1 \le k_1 < \cdots < k_i \le n \\ 1 \le l_1 < \cdots < l_j \le n}} \sum_{\substack{g_{k_1}, \ldots, g_{k_i} \in G^* \\ \left< g_{k_1}, \ldots, g_{k_i} \right> = G}}
\sum_{h_{l_1}, \ldots, h_{l_j} \in G^*} E^{t}((g_{k_a}), (h_{l_b}))
$$
(so $F(t) = F^{[n]_r \times [n]}(t)$) and
$$
\ol{F}^{I \times J}(t) := \frac{1}{p^{rn}}\sum_{i \in I} \sum_{j \in J} \sum_{\substack{1 \le k_1 < \cdots < k_i \le n \\ 1 \le l_1 < \cdots < l_j \le n}} \sum_{g_{k_1}, \ldots, g_{k_i} \in G^*}
\sum_{h_{l_1}, \ldots, h_{l_j} \in G^*} E^{t}((g_{k_a}), (h_{l_b})).
$$
It is clear that the inequality $F^{I \times J}(t) \le \ol{F}^{I \times J}(t)$ holds, as the latter sum is taken over all tuples $g_{k_1}, \ldots, g_{k_i} \in G^*$ without the condition $\left< g_{k_1}, \ldots, g_{k_i} \right> = G$.

Let $c_1 := \frac{1+c}{2}$ and $c_2 := \frac{1+c_1}{2}$, so that $c>c_1>c_2>1$. Let $\ga_1 := \frac{c_1-1}{2c_1^2}>0$ and $\ga_2 \in (0, \frac{1}{2})$ be a sufficiently small constant that will be specified later (in the proof of Theorem \ref{thm21_main2b}).
Let $I_1=[1, \ga_1 \frac{n}{\log n} )$, $I_2=[\ga_1 \frac{n}{\log n}, \ga_2 n)$ and $I_3=[\ga_2 n, n]$. Then
\begin{equation*}
\begin{split}
F(t) & \le F^{I_1 \times [n]}(t) + F^{[n]_r \times I_1}(t) + F^{I_2 \times [n]}(t) + F^{[n]_r \times I_2}(t) + F^{I_3 \times I_3}(t) \\
& \le \ol{F}^{I_1 \times [n]_0}(t) + \ol{F}^{[n]_0 \times I_1}(t) + \ol{F}^{I_2 \times [n]_0}(t) + \ol{F}^{[n]_0 \times I_2}(t) + F^{I_3 \times I_3}(t) \\
& = \ol{F}^{I_1 \times [n]_0}(t)+\ol{F}^{I_1 \times [n]_0}(t^{\vee})+\ol{F}^{I_2 \times [n]_0}(t)+\ol{F}^{I_2 \times [n]_0}(t^{\vee})+ F^{I_3 \times I_3}(t),
\end{split}
\end{equation*}
where $t^{\vee} = (t^{\vee}_{kl}) \in (\fp^*)^{[n] \times [n]}$ is defined by $t^{\vee}_{kl} := t_{lk}$. 
We obtain Theorem \ref{thm1_main2} by combining the following three theorems.

\begin{thm} \label{thm21_main2a}
For every $t \in (\fp^*)^{[n] \times [n]}$, $\displaystyle \lim_{n \to \infty} \ol{F}^{I_1 \times [n]_0}(t) = 0$.
\end{thm}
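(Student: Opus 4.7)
The plan is to use a reformulation and a tight column-by-column bound. Including the $h=0$ term in the inner sum,
\[
\ol F^{I_1 \times [n]_0}(t) = \frac{1}{p^{rn}} \sum_{i \in I_1} \binom{n}{i} \sum_{g_K \in (G^*)^i} \prod_{l=1}^n T_l, \qquad T_l := \sum_{h \in G} \prod_{a=1}^i c(t_{k_a l}(g_{k_a}\cdot h)).
\]
The target is a uniform pointwise bound $T_l/p^r \le 1 - \alpha_n(1-\alpha_n) i (1 - o(1))$; plugging into $(T_l/p^r)^n$ and combining with $\binom{n}{i}(p^r-1)^i \le (np^r)^i$ will reduce everything to a geometric sum that vanishes for $c>1$.

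For the pointwise bound, I would use the identity $c(u)^2 = 1 - \beta\rho_u$ with $\beta := 2\alpha_n(1-\alpha_n)$ and $\rho_u := 1-\cos(2\pi u/p)$, apply $\sqrt{1-x} \le 1-x/2$ and $\prod_a(1-x_a) \le \exp(-\sum_a x_a)$ to obtain
\[
\prod_{a=1}^i c(t_{k_a l}(g_{k_a}\cdot h)) \le \exp\!\Bigl(-\frac{\beta}{2}S(h)\Bigr), \qquad S(h) := \sum_{a=1}^i \rho_{t_{k_a l}(g_{k_a}\cdot h)}.
\]
Then $e^{-x} \le 1 - x + x^2/2$ for $x \ge 0$ and summation over $h \in G$ yield
\[
T_l \le p^r - \frac{\beta}{2}\sum_{h\in G} S(h) + \frac{\beta^2}{8}\sum_{h\in G} S(h)^2.
\]
Character orthogonality on $G$, together with $g_{k_a},t_{k_a l}\in\fp^*$, gives $\sum_h S(h) = ip^r$. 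Case-splitting on whether $g_{k_a},g_{k_b}$ are $\fp$-proportional and using the identity $\sum_{u\in\fp}\rho_{au}\rho_{bu} \le 2p$ for $a,b\in\fp^*$ yields $\sum_h S(h)^2 \le 2i^2 p^r$. Combining,
\[
T_l/p^r \le 1 - \alpha_n(1-\alpha_n)i + \alpha_n^2(1-\alpha_n)^2 i^2.
\]

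For $i\in I_1$ we have $i\alpha_n < c\gamma_1$, so the above rearranges to $T_l/p^r \le 1 - \alpha_n(1-\alpha_n)i\bigl(1 - c\gamma_1(1-\alpha_n)\bigr)$, whence $(T_l/p^r)^n \le n^{-ic(1-\alpha_n)^2(1 - c\gamma_1(1-\alpha_n))}$. Thus
\[
\ol F^{I_1 \times [n]_0}(t) \le \sum_{i \in I_1} (p^r)^i \cdot n^{\,i[1 - c(1-\alpha_n)^2(1 - c\gamma_1(1-\alpha_n))]}.
\]
As $n \to \infty$ the bracket tends to $1 - c(1 - c\gamma_1)$. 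Substituting $\gamma_1 = (c_1-1)/(2c_1^2) = (c-1)/(1+c)^2$ reduces $c(1 - c\gamma_1) > 1$ to $(2c+1)(c-1) > 0$, which holds for all $c > 1$. So the bracket is eventually strictly negative, each summand decays polynomially in $n$, and the geometric-type sum over $i \ge 1$ tends to $0$.

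The delicate point is preserving the full coefficient $\alpha_n(1-\alpha_n)i$ in $1 - T_l/p^r$. A crude uniform bound $c(u) \le \theta = \sqrt{1-\beta(1-\cos(2\pi/p))}$ only yields a coefficient proportional to $1-\cos(2\pi/p)$, which shrinks like $1/p^2$ for large $p$ and would force $c$ to be much larger than $1$. The second-order Taylor computation, together with the averaging $\sum_{u\in\fp}\rho_u = p$ supplied by character orthogonality, recovers the sharp constant; the leftover $O((i\alpha_n)^2)$ error is harmless because $I_1$ is defined precisely so that $i\alpha_n \le c\gamma_1$ stays small.
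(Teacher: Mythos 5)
Your proof is correct and follows essentially the same route as the paper: Taylor-expand $\prod_a c(\cdot)$ to second order, use $i\alpha_n < c\gamma_1$ on $I_1$ to absorb the quadratic error into the linear term, average over $h$ via $\sum_{u\in\fp}\rho_u = p$ (equivalently the paper's $\sum_{h\in G^*}\sin^2(\pi m/p)=p^r/2$), and observe the resulting series vanishes since $c(1-c\gamma_1)>1$, which is precisely why $\gamma_1=\frac{c_1-1}{2c_1^2}$ was chosen. The only minor variation is that you control the quadratic term by bounding $\sum_h S(h)^2\le 2i^2p^r$ with the correlation identity $\sum_u\rho_{au}\rho_{bu}\le 2p$ (which is a bit more than needed --- the crude $S(h)\le 2i$ already gives it), whereas the paper linearizes $(\sum_a x_a)^2\le\bigl(\tfrac{2c_1\log n}{n}i\bigr)\sum_a x_a$ before summing over $h$; and a harmless typo, $(1-\alpha_n)^2$ where $(1-\alpha_n)$ should appear in the exponent, does not affect the conclusion since both tend to $1$.
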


\begin{thm} \label{thm21_main2b}
For every $t \in (\fp^*)^{[n] \times [n]}$, $\displaystyle \lim_{n \to \infty} \ol{F}^{I_2 \times [n]_0}(t) = 0$.
\end{thm}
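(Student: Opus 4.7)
My plan is to collapse the $j$ and $\{l_b\}$ sums, writing
\[
\ol{F}^{I_2 \times [n]_0}(t) = \frac{1}{p^{rn}} \sum_{i \in I_2} \binom{n}{i} \sum_{(g_{k_a})\in(G^*)^i} \prod_{l=1}^n T_l(g), \quad T_l(g) := \sum_{h \in G} \prod_{a=1}^i c(t_{k_a l}(g_{k_a} \cdot h)).
\]
The inequality $c(m) \le 1 - \eta_p \al_n$ for $m \in \fp^*$ (with $\eta_p := (1-\cos(2\pi/p))/2 > 0$) yields an $l$-independent upper bound $T_l(g) \le \ol T(g) := \sum_{h \in G}(1-\eta_p \al_n)^{n_h(g)}$ with $n_h(g) := \#\{a : g_{k_a}\cdot h \ne 0\}$, so $\prod_l T_l(g) \le \ol T(g)^n$. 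I will classify each tuple by $V(g) := \langle g_{k_a}\rangle$ and $d := \dim V(g) \in [1, r]$, noting $\ol T(g) \ge p^{r-d}$ from the $h \in V(g)^\perp$ contributions, so the key question is how far $\ol T(g)$ exceeds $p^{r-d}$.

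Fixing a small $\kappa \in (0, 1/2)$ (depending only on $p, r$), I will call $g$ \emph{$\kappa$-typical} if $n_h(g) \ge \kappa i$ for every $h \in G \setminus V(g)^\perp$. For such $g$ of dimension $d$, the hypothesis $i \ge \gamma_1 n/\log n$ gives $\al_n i \ge \gamma_1 c$, hence $\ol T(g) \le p^{r-d}(1+(p^d-1)\theta) = p^r \eta_d$ with $\theta := e^{-\eta_p \kappa \gamma_1 c} < 1$ and $\eta_d := (1+(p^d-1)\theta)/p^d < 1$. The typical contribution for fixed $(i,d)$ is then $\le \binom{n}{i}\binom{r}{d}_p(p^d-1)^i \eta_d^n$; using $\log\binom{n}{i} \le nH(i/n)$ with $i/n \le \gamma_2$, its logarithm is at most $n[H(\gamma_2)+\gamma_2\log(p^d-1)+\log\eta_d]+O(\log n)$. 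Since $\log\eta_d<0$ is a fixed negative constant while $H(\gamma_2)+\gamma_2\log(p^d-1)\to 0$ as $\gamma_2 \to 0$, choosing $\gamma_2$ sufficiently small will make this $\le -\delta n$ uniformly in $(i,d)$, so the typical total is $o(1)$.

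For atypical $g$, some $h_0 \in G^* \setminus V(g)^\perp$ satisfies $n_{h_0}(g)<\kappa i$, forcing at least $(1-\kappa)i$ of the $g_{k_a}$ into the $(d-1)$-dimensional subspace $W := h_0^\perp \cap V(g)$. For each hyperplane $H\subset G$, the number of tuples in $(G^*)^i$ with $\ge(1-\kappa)i$ entries in $H$ is at most $(p^r-1)^i\, e^{-iD(1-\kappa, q)}$ by a Chernoff bound, where $q=(p^{r-1}-1)/(p^r-1)\le 1/p$ and $D(1-\kappa, q)\to \log(1/q)$ as $\kappa\to 0$. In parallel, the concentration of the $g_{k_a}$'s in $W$ upgrades the bound to $\ol T(g) \le p^{r-d+1}(1 + (p^{d-1}-1)\theta_1)$ for a constant $\theta_1<1$, provided the restriction $(g_{k_a})_{g_{k_a}\in W}$ is itself typical within $W$; otherwise I will iterate the dichotomy inside $W$, which terminates after at most $r$ rounds. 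Combining the Chernoff count with the upgraded $\ol T$-bound, the atypical contribution will also be $o(1)$ once $\kappa$ and then $\gamma_2$ are chosen small.

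The \textbf{main obstacle} will be the atypical case. Neither the Chernoff tail alone---whose constant-rate exponential savings cannot beat $\binom{n}{i}(p^r-1)^i$ when $i$ is a constant fraction of $n$---nor a universal bound $\ol T(g) \le p^r(1-\Omega(\al_n))$---which yields only polynomial-in-$n$ decay---suffices on its own. The argument will require combining them: the concentration structure reduces the effective dimension by one, supplying the crucial factor $p^{-n}$ in $\ol T(g)^n/p^{rn}$, while the Chernoff tail controls the count of such concentrated tuples. The parameter $\gamma_2$ is chosen last, small enough to absorb all polynomial-in-$1/\gamma_2$ counting terms into exponentially small losses.
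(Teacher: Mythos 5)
Your typical-case analysis is correct and the overall reduction (collapse the $j$-sum into a product, bound $T_l(g)$ by an $l$-independent $\ol T(g)$, then bound $\ol T(g)^n$) is the right frame. But the approach is substantially more complicated than necessary, and the paragraph where you diagnose the ``main obstacle'' contains a misstep: you consider only the weak universal bound $\ol T(g) \le p^r(1-\Omega(\al_n))$ and conclude that it gives merely polynomial decay, hence that a universal bound alone cannot work and Chernoff counts must be brought in. In fact a much stronger universal bound holds, and once you see it, the entire typical/atypical dichotomy, the Chernoff tail, and the $r$-round iteration inside $W$ all disappear.

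The observation you missed is a pigeonhole on the quantities $N_h := n_h(g) = \#\{a : g_{k_a}\cdot h \neq 0\}$. Since each nonzero $g_{k_a}$ is non-orthogonal to exactly $p^r - p^{r-1}$ elements $h \in G^*$, one has $\sum_{h\in G^*} N_h = i(p^r - p^{r-1})$, hence
\[
\max_{h\in G^*} N_h \;\ge\; \frac{i(p^r-p^{r-1})}{p^r-1} \;>\; \frac{(p-1)}{p}\, i \;\ge\; \frac{\gamma_1(p-1)}{p}\cdot\frac{n}{\log n}
\]
for every $i\in I_2$ and every tuple $(g_{k_a})\in (G^*)^i$, with no typicality hypothesis whatsoever. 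Plugging this one large $N_h$ into the sum gives
\[
\ol T(g) = 1+\sum_{h\in G^*}(1-\eta_p\al_n)^{N_h} \;\le\; (p^r-1) + \exp\!\Bigl(-\eta_p\,\al_n\cdot\tfrac{\gamma_1(p-1)}{p}\,\tfrac{n}{\log n}\Bigr) \;=\; (p^r-1) + e^{-\Theta(1)} \;=:\; K \;<\; p^r,
\]
a constant strictly below $p^r$, uniformly over all $g$ and all $i\in I_2$. This is the universal bound $\ol T(g)\le p^r(1-\Omega(1))$ that you implicitly assumed did not exist. With it, the remainder of your argument already goes through: $\frac{1}{p^{rn}}\sum_{i\in I_2}\binom{n}{i}(p^r-1)^i K^n \le \gamma_2 n\,\bigl(\frac{K}{p^r}\cdot\frac{(p^r-1)^{\gamma_2}}{\gamma_2^{\gamma_2}(1-\gamma_2)^{1-\gamma_2}}\bigr)^n$, and choosing $\gamma_2$ small makes the bracketed factor below $1$. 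This is exactly the paper's proof.

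Two further remarks on your outline. First, the opening equality for $\ol F^{I_2\times[n]_0}(t)$ is not quite right as an equality, because $T_l$ depends on the choice of indices $k_1<\dots<k_i$ through the entries $t_{k_a l}$; once you replace $T_l$ by the $(k,l)$-independent $\ol T$ it becomes a genuine inequality, which is all you use. Second, while the atypical branch could probably be pushed through as you suggest, the iteration is not innocuous: after restricting to a codimension-one subspace $W$ you must re-pose the typicality question for the restricted span $\langle g_{k_a}: g_{k_a}\in W\rangle$ (which need not be all of $W$), the parameter $\kappa$ degrades geometrically across the $\le r$ rounds, and at each round the $h\in W^\perp$ contributions are uncontrolled; the resulting uniform constant bound does exist but requires bookkeeping you have not supplied, and it is exactly the uniform constant bound that the pigeonhole delivers in one line.
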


\begin{thm} \label{thm21_main2c}
For every $t \in (\fp^*)^{[n] \times [n]}$, $\displaystyle \lim_{n \to \infty} F^{I_3 \times I_3}(t) = 0$.
\end{thm}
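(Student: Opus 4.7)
The plan is to execute the structural decomposition set up after the theorem statement: split $F^{I_3 \times I_3}(t)$ according to whether the tuples $(g_{k_a})$, $(h_{l_b})$ fall in $\mathcal{C}_1$, $\mathcal{C}_2$, or $\mathcal{C}_3$, further subdivide $\mathcal{C}_3$ into $\mathcal{C}_{3,a}, \mathcal{C}_{3,b}, \mathcal{C}_{3,c}$, and show that each of these five contributions tends to $0$. The workhorse estimate throughout is the elementary bound $c(m) \le e^{-\kappa_p \alpha_n}$ for $m \in \fp^*$ (with $\kappa_p > 0$ depending only on $p$), which yields
\[
E^t((g_{k_a}),(h_{l_b})) \le \exp\bigl(-\kappa_p \alpha_n N\bigr), \qquad N := |\{(a,b) \in [i]\times[j] : g_{k_a} \cdot h_{l_b} \ne 0\}|.
\]
Since the raw enumeration in $I_3 \times I_3$ is at most $p^{-rn}\binom{n}{i}\binom{n}{j}(p^r-1)^{i+j} \le p^{rn}4^n$, any configuration with $N \gg n/\alpha_n = n^2/\log n$ is killed outright, so the entire task is to control the configurations where $N$ is small.

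Case $\mathcal{C}_1$ is immediate once a popularity threshold $\delta>0$ is fixed: if some popular $g \in A$ and $h \in B$ satisfy $g \cdot h \ne 0$, then the $\ge \delta i$ occurrences of $g$ and $\ge \delta j$ occurrences of $h$ force $N \ge \delta^2 \gamma_2^2 n^2$, and the decay $\exp(-\kappa_p c \delta^2 \gamma_2^2 n \log n)$ annihilates all counting. Case $\mathcal{C}_2$ relies on a combinatorial deficit: a strict inclusion, say $A \subsetneq V\setminus\{0\}$, means some $g_0 \in V\setminus\{0\}$ is unpopular among $(g_{k_a})$, and the count of tuples in which $g_0$ appears fewer than $\delta i$ times while each other element of $V\setminus\{0\}$ appears at least $\delta i$ times is smaller than the unrestricted $(|V|-1)^i(|V^\perp|-1)^j$ by a Stirling factor $\exp(-c'n)$. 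Combined with the mild decay from the few forced unpopular entries $g_{k_a} \notin V$ (required by $\langle g_{k_a}\rangle = G$), this suffices after union-bounding over $V$ and the missing element $g_0$.

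The main obstacle is case $\mathcal{C}_3$, where $A = V\setminus\{0\}$, $B = V^\perp\setminus\{0\}$ with $0 < \dim V < r$: popular pairs contribute nothing to $N$, and the enumeration $p^{-rn}(|V|-1)^i(|V^\perp|-1)^j$ saturates at $O(1)$ when $i,j$ are close to $n$, so no combinatorial slack remains. The three sub-cases of $\mathcal{C}_3$ should be organised by the number $s$ of unpopular indices on the $g$ side (forced to be at least $r-\dim V$) and its analogue on the $h$ side: each unpopular $g_{k_a} \notin V$ has $\Theta(n)$ popular $h_{l_b} \in V^\perp$ with nonzero dot product (those outside the kernel of $g_{k_a}|_{V^\perp}$, a hyperplane of $V^\perp$), contributing a decay factor of order $\exp(-\kappa_p c (1-1/p) \log n) = n^{-\Theta(c)}$, which must beat the choice factor $\binom{i}{s}(p^r - p^{\dim V})^s$. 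The balance $n^{-\Theta(cs)}$ versus $n^s \cdot p^{rs}$ converges geometrically in $s$ only when the exponent $\kappa_p c (1 - 1/p)$ strictly exceeds the combinatorial rate, so this is exactly where the strict inequality $c>1$ is consumed; any $c \le 1$ would allow this extremal subcase to fail, matching the lower bound from the introduction. Once all five contributions vanish, summing yields Theorem \ref{thm21_main2c}.
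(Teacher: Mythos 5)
The structural decomposition you propose — $\mathcal{C}_1$, $\mathcal{C}_2$, $\mathcal{C}_3$, with $\mathcal{C}_3$ further split three ways — is exactly the paper's, and your treatments of $\mathcal{C}_1$ and $\mathcal{C}_2$ are in the same spirit as Propositions \ref{prop_case1} and \ref{prop_case2}. However, your sketch of the critical extremal subcase of $\mathcal{C}_3$ (the paper's $\mathcal{C}_{3,c}$, where the total number of unpopular indices is $O(n/\log n)$) has a genuine gap.

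You claim that each unpopular $g_{k_a}\notin V$ contributes a decay factor of order $n^{-\Theta(c)}$ coming from its $\Theta(n)$ popular $h_{l_b}\in V^\perp$ with $g_{k_a}\cdot h_{l_b}\neq 0$, and that this beats the choice factor of order $n^s$. But the constant hiding inside that $\Theta(n)$ is proportional to the popularity threshold $\gamma_3$: for a fixed unpopular $g_{k_a}$, the relevant indices are those $b$ with $h_{l_b}$ outside the hyperplane $V^\perp\cap\ker(\,\cdot\, g_{k_a})$, and in the worst admissible configuration their number is only about $\gamma_3\,|V^\perp|(1-\tfrac1p)\,n$. Since $\gamma_3$ must be taken small — it is forced small by the entropy bound in the $\mathcal{C}_2$ argument — the resulting per-unpopular-index decay exponent is $O(\gamma_3 c)$, which is $\ll 1$ and cannot beat the $n^{|X|+|Y|}$ coming from $\binom{n}{|X|}\binom{n}{|Y|}$. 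So the crude estimate $c(m)\le 1-\tfrac{8c_1\log n}{p^2 n}$ for $m\neq 0$ is not strong enough here, and the geometric series you write down does not actually converge.

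The paper's fix, which your sketch does not identify, is to abandon the worst-configuration bound and instead sum exactly over the popular values. Using the sharper estimate $c(m)\le 1-\tfrac{2c_1\log n}{n}\sin^2\tfrac{\pi m}{p}$ from Lemma \ref{lem2b} together with the averaging identity
\[
\sum_{g\in V}\sin^2\frac{\pi\, t\,(g\cdot h)}{p} \;=\; \frac{|V|}{2}\qquad (h\notin V^\perp),
\]
one shows that $\sum_{g_{x'}\in V}\bigl(1-\tfrac{2c_2\log n}{n}\sum_{y\in Y}\sin^2\tfrac{\pi m_{x'y}}{p}\bigr)=|V|\bigl(1-\tfrac{c_2\log n}{n}|Y|\bigr)$. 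Raising this over the $\approx n$ popular indices $x'\in X^c$ gives a decay of $n^{-c_2|Y|}$ that is independent of $\gamma_3$, and symmetrically $n^{-c_2|X|}$, with $c_2>1$ precisely because $c>1$. This is what makes the geometric series in $|X|+|Y|$ converge; without it, the argument fails, and this averaging step is the genuinely nonobvious content of the theorem. You also do not actually specify what $\mathcal{C}_{3,a},\mathcal{C}_{3,b},\mathcal{C}_{3,c}$ are (the paper's thresholds $\gamma_5\frac{n}{\log n}$ and $\gamma_1\frac{n}{\log n}$ on the unpopular counts), so the claim that "all five contributions vanish" rests on an unrealized partition.
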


We prove the first two theorems in Section \ref{Sub22}, and prove Theorem \ref{thm21_main2c} in Sections \ref{Sub23} and \ref{Sub24}. The latter requires a more delicate analysis, in which $F^{I_3 \times I_3}(t)$ is further decomposed into partial sums.

\subsection{Proof of Theorems \ref{thm21_main2a} and \ref{thm21_main2b}} \label{Sub22}

We begin with a simple lemma providing an upper bound of 
$$
c(m) := \lt 1-\al_n+\al_n \zeta^m \rt.
$$

\begin{lem} \label{lem2b}
For every $m \in \fp$, 
$$
c(m) \le 1 - \frac{2c_1 \log n}{n} \sin^2 \frac{\pi m}{p}
$$ for all sufficiently large $n$. 
Moreover, if $m \ne 0$, then 
$$
c(m) \le 1 - \frac{\log n}{n} \frac{8c_1}{p^2}.
$$
\end{lem}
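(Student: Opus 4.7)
The plan is to reduce $c(m)$ to a clean trigonometric expression, then apply two standard elementary inequalities.

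First I would compute $c(m)^2 = |1 - \al_n + \al_n \ze^m|^2$ directly by splitting into real and imaginary parts. Writing $\theta := \frac{2\pi m}{p}$, expanding gives
$$
c(m)^2 = (1 - \al_n + \al_n \cos\theta)^2 + (\al_n \sin\theta)^2 = 1 - 2\al_n(1-\al_n)(1-\cos\theta).
$$
The half-angle identity $1-\cos\theta = 2 \sin^2(\theta/2) = 2\sin^2(\pi m/p)$ then yields
$$
c(m)^2 = 1 - 4\al_n(1-\al_n) \sin^2 \tfrac{\pi m}{p}.
$$

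Next I would apply $\sqrt{1-y} \le 1 - y/2$ for $y \in [0,1]$ to the identity above to obtain
$$
c(m) \le 1 - 2\al_n(1-\al_n) \sin^2 \tfrac{\pi m}{p}.
$$
Since $\al_n = \frac{c \log n}{n}$ and $c_1 = \frac{1+c}{2} < c$, the inequality $2\al_n(1-\al_n) \ge \frac{2c_1 \log n}{n}$ is equivalent to $c(1 - \al_n) \ge c_1$, which holds for all sufficiently large $n$ because $c(1-\al_n) \to c > c_1$. This gives the first claim.

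For the moreover part, I would use Jordan's inequality $\sin x \ge \frac{2x}{\pi}$ on $[0, \pi/2]$ together with the symmetry $\sin(\pi m/p) = \sin(\pi(p-m)/p)$ to conclude that for $m \in \fp \setminus \{0\}$,
$$
\sin^2 \tfrac{\pi m}{p} \ge \sin^2 \tfrac{\pi}{p} \ge \tfrac{4}{p^2}.
$$
Substituting into the first bound yields $c(m) \le 1 - \frac{\log n}{n} \cdot \frac{8c_1}{p^2}$. There is no real obstacle here: the whole argument is an elementary computation, and the only mild care needed is to confirm that the threshold $c(1-\al_n) \ge c_1$ is what determines ``sufficiently large $n$.''
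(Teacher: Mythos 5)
Your proof is correct and follows essentially the same route as the paper: compute $c(m)^2 = 1 - 4\al_n(1-\al_n)\sin^2\frac{\pi m}{p}$, apply $\sqrt{1-y}\le 1-y/2$, note $\al_n(1-\al_n)\ge \frac{c_1\log n}{n}$ for large $n$, and finish with $\sin\frac{\pi}{p}\ge \frac{2}{p}$. The only difference is cosmetic: you spell out that the last step is Jordan's inequality and that the sufficiently-large-$n$ threshold is governed by $c(1-\al_n)\ge c_1$, details the paper leaves implicit.
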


\begin{proof}
Let $\theta_m = \frac{\pi m}{p}$. A direct computation implies that
\begin{align*}
c(m) & =  \left | 1 - \al_n + \al_n(\cos 2 \theta_m + i \sin 2 \theta_m)  \right | \\
& = \sqrt{\left( 1 - \al_n + \al_n \cos 2 \theta_m \right)^2 + \al_n^2 \sin^2 2 \theta_m } \\
& = \sqrt{ 1 - 4 \al_n(1-\al_n)\sin^2 \theta_m} \\
& \le 1 - 2\al_n(1-\al_n)\sin^2 \theta_m.
\end{align*}
If $n$ is sufficiently large, then 
$$
\al_n(1-\al_n) = \frac{c \log n}{n} (1 - \frac{c \log n}{n}) > \frac{c_1 \log n}{n}
$$
so $c(m) \le 1 - \frac{2c_1 \log n}{n} \sin^2 \theta_m$.
If $m \ne 0$, then $\sin\frac{\pi m}{p} \ge \sin\frac{\pi}{p} \ge \frac{2}{p}$ so
\begin{equation*}
c(m) \le 1 - \frac{2c_1 \log n}{n} \sin^2 \frac{\pi m}{p}
\le 1 - \frac{2c_1 \log n}{n} \left( \frac{2}{p} \right)^2 = 1 - \frac{\log n}{n} \frac{8c_1}{p^2}. \qedhere
\end{equation*}
\end{proof}

\begin{proof}[Proof of Theorem \ref{thm21_main2a}]
Write $m_{kl} := t_{kl}(g_k \cdot h_l) \in \fp$ for simplicity. 
Then
$$
\ol{F}^{I_1 \times [n]_0}(t) = \frac{1}{p^{rn}}\sum_{i \in I_1} \sum_{1 \le k_1 < \cdots < k_i \le n} \sum_{g_{k_1}, \ldots, g_{k_i} \in G^*} \prod_{l=1}^{n} \left ( 1 + \sum_{h_l \in G^*} \prod_{a=1}^{i} c(m_{k_a l}) \right ).
$$
Assume that $n$ is sufficiently large so that $c(m) \le 1 - \frac{2c_1 \log n}{n} \sin^2 \frac{\pi m}{p}$ for every $m \in \fp$. If $i \in I_1$, then $i < \ga_1 \frac{n}{\log n}$ so
\begin{align*}
\prod_{a=1}^{i} c(m_{k_a l})
& \le \prod_{a=1}^{i} \left (1-\frac{2c_1 \log n}{n} \sin^2 \frac{\pi m_{k_a l}}{p} \right ) \\
& \le \exp \left ( - \frac{2c_1 \log n}{n} \sum_{a=1}^{i}\sin^2 \frac{\pi m_{k_a l}}{p} \right ) \\
& \le 1 - \frac{2c_1 \log n}{n} \sum_{a=1}^{i}\sin^2 \frac{\pi m_{k_a l}}{p} + \frac{1}{2} \left( \frac{2c_1 \log n}{n} \sum_{a=1}^{i}\sin^2 \frac{\pi m_{k_a l}}{p} \right)^2 \\
& \le 1 - \frac{2c_1 \log n}{n} \left( 1 - \frac{c_1 \log n}{n} i \right) \sum_{a=1}^{i}\sin^2 \frac{\pi m_{k_a l}}{p} \\
& \le 1 - \frac{2c_1 \log n}{n} (1 - c_1 \ga_1) \sum_{a=1}^{i}\sin^2 \frac{\pi m_{k_a l}}{p} \\
& = 1 - \frac{2c_2 \log n}{n} \sum_{a=1}^{i}\sin^2 \frac{\pi m_{k_a l}}{p}.
\end{align*}
(The last equality holds since $c_1(1-c_1 \ga_1)=c_2$.) The formula
$$
\sum_{h_l \in G^*}\sin^2 \frac{\pi m_{k_a l}}{p} = p^{r-1} \sum_{t=1}^{p-1} \sin^2 \frac{\pi t}{p} = \frac{p^r}{2}
$$
implies that
\begin{align*}
1 + \sum_{h_l \in G^*} \prod_{a=1}^{i} c(m_{k_a l})
\le \, & 1 + \sum_{h_l \in G^*} \left ( 1 - \frac{2c_2 \log n}{n} \sum_{a=1}^{i}\sin^2 \frac{\pi m_{k_a l}}{p} \right ) \\
= \, & p^r - \frac{2c_2 \log n}{n} \sum_{a=1}^{i}\sum_{h_l \in G^*}\sin^2 \frac{\pi m_{k_a l}}{p} \\
= \, & p^r - \frac{2c_2 \log n}{n} \sum_{a=1}^{i} \frac{p^r}{2} \\
= \, & p^r \left ( 1 - \frac{c_2 \log n}{n} i \right ).
\end{align*}
Now we have
\begin{align*}
\ol{F}^{I_1 \times [n]_0}(t)
& \le \frac{1}{p^{rn}}\sum_{i \in I_1} \sum_{1 \le k_1 < \cdots < k_i \le n} \sum_{g_{k_1}, \ldots, g_{k_i} \in G^*} (p^r)^n \left ( 1 - \frac{c_2 \log n}{n} i \right )^n \\
& \le \sum_{i \in I_1} \binom{n}{i} (p^r-1)^i \frac{1}{n^{c_2 i}} \\
& < \left ( 1 + \frac{p^r-1}{n^{c_2}} \right )^n - 1,
\end{align*}
and the last term converges to $0$ as $n \to \infty$ since $c_2>1$.
\end{proof}

\begin{proof}[Proof of Theorem \ref{thm21_main2b}]
Assume that $i \in I_2$. For $g_{k_1}, \ldots, g_{k_i} \in G^*$ and $h \in G^*$, define
$$
N_h = N_h(g_{k_1}, \ldots, g_{k_i}) := \lt \{ a \in [i] : g_{k_a} \cdot h \ne 0 \} \rt.
$$
Then 
$$
\sum_{h \in G^*} N_h = \sum_{a=1}^{i} \lt \{ h \in G^* : g_{k_a} \cdot h \ne 0 \} \rt = i(p^r - p^{r-1})
$$ 
so
\begin{equation*}
\max_{h \in G^*} N_h \ge \frac{i(p^r - p^{r-1})}{p^r-1}
> \frac{\ga_1 (p-1)}{p} \frac{n}{\log n}. 
\end{equation*}
By Lemma \ref{lem2b} and the above inequality,
\begin{align*}
1 + \sum_{h_l \in G^*} \prod_{a=1}^{i} c(m_{k_a l}) 
& \le 1 + \sum_{h_l \in G^*} \left ( 1 - \frac{\log n}{n} \frac{8c_1}{p^2} \right)^{N_{h_l}} \\
& < (p^r-1)+ \left( 1 - \frac{\log n}{n} \frac{8c_1}{p^2} \right)^{\frac{\ga_1 (p-1)}{p} \frac{n}{\log n}} \\
& \le (p^r-1) + \exp \left( -\frac{8(p-1)c_1 \ga_1}{p^3} \right) \\
& =: K.
\end{align*}
This implies that
\begin{align*}
\ol{F}^{I_2 \times [n]_0}(t) 
& = \frac{1}{p^{rn}}\sum_{i \in I_2} \sum_{1 \le k_1 < \cdots < k_i \le n} \sum_{g_{k_1}, \ldots, g_{k_i} \in G^*} \prod_{l=1}^{n} \left ( 1 + \sum_{h_l \in G^*} \prod_{a=1}^{i} c(m_{k_a l}) \right ) \\
& \le \frac{1}{p^{rn}}\sum_{i < \ga_2 n} \sum_{1 \le k_1 < \cdots < k_i \le n} \sum_{g_{k_1}, \ldots, g_{k_i} \in G^*} K^n \\
& \le \frac{K^n}{p^{rn}} \sum_{i < \ga_2 n} \binom{n}{i} (p^r-1)^i \\
& \le \frac{K^n}{p^{rn}} \cdot \ga_2 n \binom{n}{\lf \ga_2 n \rf} (p^r-1)^{\ga_2 n}.
\end{align*}
(The last inequality uses the assumption $\ga_2 < \frac{1}{2}$.) By \cite[Example 12.1.3]{CT06}, 
$$
\binom{n}{\lf \ga_2 n \rf} \le 2^{nH(\ga_2)} < \frac{1}{(\ga_2^{\ga_2}(1-\ga_2)^{1-\ga_2})^n},
$$
where $H(q) := - q \log q - (1-q) \log (1-q)$ denotes the binary entropy function. Since $\frac{K}{p^r}<1$ and
$$
\lim_{\ga_2 \to 0} \frac{(p^r-1)^{\ga_2}}{\ga_2^{\ga_2}(1-\ga_2)^{1-\ga_2}} = 1,
$$
we may choose $\ga_2$ sufficiently small so that
\begin{equation*}
\frac{K}{p^r} \cdot \frac{(p^r-1)^{\ga_2}}{\ga_2^{\ga_2}(1-\ga_2)^{1-\ga_2}} < 1.
\end{equation*}
In this case,
$$
\frac{K^n}{p^{rn}} \cdot \ga_2 n \binom{n}{\lf \ga_2 n \rf} (p^r-1)^{\ga_2 n}
\le \ga_2 n \left ( \frac{K}{p^r} \cdot \frac{(p^r-1)^{\ga_2}}{\ga_2^{\ga_2}(1-\ga_2)^{1-\ga_2}} \right )^n,
$$
and the right-hand side converges to $0$ as $n \to \infty$.
\end{proof}

\subsection{Proof of Theorem \ref{thm21_main2c}: Cases \texorpdfstring{$\mc{C}_1$}{C1} and \texorpdfstring{$\mc{C}_2$}{C2}} \label{Sub23}

In the remainder of the paper, we prove Theorem \ref{thm21_main2c}. When $r=1$, the proof is straightforward as follows. Indeed, if $r=1$, then $g_{k_a} \cdot h_{l_b} \ne 0$ for every $g_{k_a}, h_{l_b} \in \fp^*$. Hence
\begin{align*}
F^{I_3 \times I_3}(t)
& \le \ol{F}^{I_3 \times I_3}(t) \\
& = \frac{1}{p^n}\sum_{i,j \in I_3} \sum_{\substack{1 \le k_1 < \cdots < k_i \le n \\ 1 \le l_1 < \cdots < l_j \le n}} \sum_{\substack{g_{k_1}, \ldots, g_{k_i} \in \fp^* \\ h_{l_1}, \ldots, h_{l_j} \in \fp^*}} E^{t}((g_{k_a}), (h_{l_b})) \\
& \le \frac{1}{p^n} \sum_{i,j \in I_3} \sum_{\substack{1 \le k_1 < \cdots < k_i \le n \\ 1 \le l_1 < \cdots < l_j \le n}} \sum_{\substack{g_{k_1}, \ldots, g_{k_i} \in \fp^* \\ h_{l_1}, \ldots, h_{l_j} \in \fp^*}} \left( 1 - \frac{\log n}{n} \frac{8c_1}{p^2} \right)^{ij} \\
& \le p^n \left( 1 - \frac{\log n}{n} \frac{8c_1}{p^2} \right)^{\ga_2^2 n^2},
\end{align*}
and the last term converges to $0$ as $n \to \infty$. However, the proof of Theorem \ref{thm21_main2c} is significantly more involved when $r>1$.
The main difficulty is that, when $r>1$, the condition $i,j \in I_3$ no longer guarantees that the quantity
$$
N := \{ (a,b) \in [i] \times [j] : g_{k_a} \cdot h_{l_b} \ne 0 \}
$$
is large enough. To handle this issue, we further subdivide the case $i,j \in I_3$ into several subcases.

Let $\ga_3 < \min \left( \frac{\ga_2}{p^r-1}, \frac{1}{2p^r} \right)$ be a sufficiently small positive constant that will be specified later (in the proof of Proposition \ref{prop_case2}). For given $g_{k_1}, \ldots, g_{k_i} \in G^*$ and $h_{l_1}, \ldots, h_{l_j} \in G^*$, define
$$
i_g = i_g(g_{k_1}, \ldots, g_{k_i}) := | \{ a \in [i] : g_{k_a}=g \} |
$$
for each $g \in G^*$ and
$$
j_h = j_h(h_{l_1}, \ldots, h_{l_j}) := | \{ b \in [j] : h_{l_b}=h \} |
$$
for each $h \in G^*$. Then $i = \sum_{g \in G^*} i_g$ and $j = \sum_{h \in G^*} j_h$. Define
\begin{align*}
A & = A_{g_{k_1}, \ldots, g_{k_i}} := \{ g \in G^* : i_g \ge \ga_3 n \}, \\
B & = B_{h_{l_1}, \ldots, h_{l_j}} := \{ h \in G^* : j_h \ge \ga_3 n \}.
\end{align*}
Since $\ga_3 < \frac{\ga_2}{p^r-1}$, both $A$ and $B$ are nonempty whenever $i,j \in I_3$.

It is clear that for every $g_{k_1}, \ldots, g_{k_i} \in G^*$ and $h_{l_1}, \ldots, h_{l_j} \in G^*$, exactly one of the following holds:
\begin{enumerate}
    \item $g \cdot h \ne 0$ for some $(g,h) \in A \times B$;
    \item $g \cdot h = 0$ for every $(g,h) \in A \times B$, and $(\lt A \rt+1)(\lt B \rt+1) < p^r$;
    \item $g \cdot h = 0$ for every $(g,h) \in A \times B$, and $(\lt A \rt+1)(\lt B \rt+1) = p^r$.
\end{enumerate}

Let $\mc{C} := \mc{C}(g_{k_1}, \ldots, g_{k_i}, h_{l_1}, \ldots, h_{l_j})$ be a condition on the elements $g_{k_1}, \ldots, g_{k_i}, h_{l_1}, \ldots, h_{l_j} \in G^*$. Denote the three conditions above by $\mc{C}_1$, $\mc{C}_2$ and $\mc{C}_3$, respectively. For a condition $\mc{C}$ and $t \in (\fp^*)^{[n] \times [n]}$, define
$$
F_{\mc{C}}^{I_3 \times I_3}(t) := \frac{1}{p^{rn}}\sum_{i,j \in I_3} \sum_{\substack{1 \le k_1 < \cdots < k_i \le n \\ 1 \le l_1 < \cdots < l_j \le n}} \sum_{\substack{g_{k_1}, \ldots, g_{k_i} \in G^* \\ \left< g_{k_1}, \ldots, g_{k_i} \right> = G}}
\sum_{\substack{h_{l_1}, \ldots, h_{l_j} \in G^* \\ \mc{C} \text{ is satisfied}}} E^{t}((g_{k_a}), (h_{l_b})).
$$
Then
$$
F^{I_3 \times I_3}(t) = \sum_{k=1}^{3} F_{\mc{C}_k}^{I_3 \times I_3}(t)
$$
so Theorem \ref{thm21_main2c} holds if and only if
each $F_{\mc{C}_k}^{I_3 \times I_3}(t)$ converges to $0$ as $n \to \infty$.

In this section, we show that
$$
\underset{n \to \infty}{\lim} F_{\mc{C}_1}^{I_3 \times I_3}(t) = \underset{n \to \infty}{\lim} F_{\mc{C}_2}^{I_3 \times I_3}(t) = 0
$$
for every $t \in (\fp^*)^{[n] \times [n]}$. In Section \ref{Sub24}, we show that
$$
\underset{n \to \infty}{\lim} F_{\mc{C}_3}^{I_3 \times I_3}(t) = 0
$$
for every $t \in (\fp^*)^{[n] \times [n]}$ by further subdividing $\mc{C}_3$ into three subcases $\mc{C}_{3,a}$, $\mc{C}_{3,b}$ and $\mc{C}_{3,c}$.

\begin{prop} \label{prop_case1}
For every $t \in (\fp^*)^{[n] \times [n]}$, $\underset{n \to \infty}{\lim} F_{\mc{C}_1}^{I_3 \times I_3}(t) = 0$.
\end{prop}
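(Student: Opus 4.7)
The plan is to exploit case $\mc{C}_1$ by extracting a $\ga_3 n \times \ga_3 n$ rectangular block of pairs $(a,b)$ inside the product $E^t((g_{k_a}),(h_{l_b}))$ on which every factor is strictly less than $1$, and to show that this block forces $E^t$ to decay like $n^{-\Omega(n)}$, which is fast enough to annihilate the crude $C^n$ bound on the number of summands.

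Concretely, the first step is to fix any tuple $(g_{k_a}),(h_{l_b})$ in case $\mc{C}_1$ and pick a popular pair $(g,h) \in A \times B$ with $g \cdot h \ne 0$. By the definitions of $A$ and $B$, there are at least $i_g \ge \ga_3 n$ indices $a$ with $g_{k_a}=g$ and at least $j_h \ge \ga_3 n$ indices $b$ with $h_{l_b}=h$. For each of the resulting $i_g j_h \ge \ga_3^2 n^2$ pairs $(a,b)$, the exponent $t_{k_a l_b}(g_{k_a} \cdot h_{l_b}) = t_{k_a l_b}(g \cdot h)$ is a nonzero element of $\fp$, since $t_{k_a l_b} \in \fp^*$ and $g \cdot h \ne 0$. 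Lemma~\ref{lem2b} gives $c(t_{k_a l_b}(g_{k_a} \cdot h_{l_b})) \le 1 - \frac{8c_1 \log n}{p^2 n}$ on each such pair and $c(\cdot) \le 1$ on the remaining pairs, so
$$
E^t((g_{k_a}),(h_{l_b})) \le \left(1 - \frac{8c_1 \log n}{p^2 n}\right)^{\ga_3^2 n^2} \le \exp\left(-\frac{8 c_1 \ga_3^2}{p^2}\, n \log n\right).
$$

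Next I would bound the number of summands by a crude enumeration, dropping both the spanning condition and $\mc{C}_1$ itself: $(i,j) \in I_3 \times I_3$ contributes at most $n^2$ choices; the index sets $(k_a)$ and $(l_b)$ contribute at most $\binom{n}{i}\binom{n}{j} \le 4^n$; and the values of $g_{k_a}, h_{l_b}$ in $G^*$ contribute at most $(p^r-1)^{i+j} \le (p^r-1)^{2n}$. Combining with the previous bound and dividing by $p^{rn}$ yields
$$
F_{\mc{C}_1}^{I_3 \times I_3}(t) \le n^2 \cdot 4^n \cdot \frac{(p^r-1)^{2n}}{p^{rn}} \cdot \exp\left(-\frac{8 c_1 \ga_3^2}{p^2}\, n \log n\right),
$$
and the right-hand side is of the form $e^{O(n) - \Omega(n \log n)}$, which tends to $0$ as $n \to \infty$.

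The only potential subtlety is verifying that the $\ga_3^2 n^2$ "bad" pairs really occur inside the \emph{same} tuple $((g_{k_a}),(h_{l_b}))$ we are summing over; this is immediate here because $A$ and $B$ are extracted from that very tuple. I expect no substantial obstacle in this case: $\mc{C}_1$ is the comparatively easy regime precisely because the popular support already forces a macroscopic rectangular block of nonzero dot products, and any loose bound on the number of summands suffices. The real difficulty of Theorem~\ref{thm21_main2c} is reserved for $\mc{C}_2$ and $\mc{C}_3$, where $g \cdot h = 0$ on all of $A \times B$ and the block argument above collapses.
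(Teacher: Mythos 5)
Your proposal is correct and follows essentially the same path as the paper's proof: extract a macroscopic $\ga_3 n \times \ga_3 n$ block from the popular pair $(g,h) \in A\times B$ with $g\cdot h \ne 0$, use Lemma~\ref{lem2b} to get $E^t \le \exp(-\tfrac{8c_1\ga_3^2}{p^2}n\log n)$, and kill the crude $\exp(O(n))$ count of summands. The only cosmetic difference is that the paper bounds the count exactly by $\sum_{i,j}\binom{n}{i}\binom{n}{j}(p^r-1)^{i+j}=p^{2rn}$, while you use the slightly looser $n^2\cdot 4^n\cdot(p^r-1)^{2n}$; both are $\exp(O(n))$ and either works.
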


\begin{proof}
If the condition $\mc{C}_1$ holds, then $g \cdot h \ne 0$ for some $(g,h) \in A \times B$ so
$$
E^{t}((g_{k_a}), (h_{l_b}))
\le \prod_{g_{k_a}=g} \prod_{h_{l_b}=h} c(t_{k_a l_b}(g \cdot h))
\le \left( 1 - \frac{\log n}{n} \frac{8c_1}{p^2} \right)^{i_g j_h} 
\le \left( 1 - \frac{\log n}{n} \frac{8c_1}{p^2} \right)^{\ga_3^2 n^2}
$$
by Lemma \ref{lem2b}. Therefore
\begin{align*}
F_{\mc{C}_1}^{I_3 \times I_3}(t)
& \le \frac{1}{p^{rn}}\sum_{i,j \in I_3} \sum_{\substack{1 \le k_1 < \cdots < k_i \le n \\ 1 \le l_1 < \cdots < l_j \le n}} \sum_{\substack{g_{k_1}, \ldots, g_{k_i} \in G^* \\ \left< g_{k_1}, \ldots, g_{k_i} \right> = G}}
\sum_{h_{l_1}, \ldots, h_{l_j} \in G^*}
\left( 1 - \frac{\log n}{n} \frac{8c_1}{p^2} \right)^{\ga_3^2 n^2} \\
& \le \frac{1}{p^{rn}}\sum_{i,j \in [n]_0} \binom{n}{i} \binom{n}{j} (p^r-1)^i (p^r-1)^j
\exp \left( - \frac{\log n}{n} \frac{8c_1}{p^2} \ga_3^2 n^2 \right) \\
& = p^{rn} n^{- \frac{8c_1 \ga_3^2}{p^2} n},
\end{align*}
and the last term converges to $0$ as $n \to \infty$.
\end{proof}

\begin{prop} \label{prop_case2}
For every $t \in (\fp^*)^{[n] \times [n]}$, $\underset{n \to \infty}{\lim} F_{\mc{C}_2}^{I_3 \times I_3}(t) = 0$.
\end{prop}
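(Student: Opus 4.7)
The plan is to exploit the strict inequality that defines case $\mc{C}_2$: since $(|A|+1)(|B|+1)$ is an integer strictly less than $p^r$, we have $(|A|+1)(|B|+1) \le p^r - 1$, which will ultimately furnish a geometric decay factor of the form $(1 - p^{-r})^n$. As only finitely many triples $(V, A, B)$ with $A \subseteq V \setminus \{0\}$, $B \subseteq V^{\perp} \setminus \{0\}$, and $(|A|+1)(|B|+1) < p^r$ can arise (a quantity depending only on $p$ and $r$), it suffices to fix such a triple and show that the corresponding partial sum tends to $0$.

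Fix $(V, A, B)$, write $a = |A|$, $b = |B|$, and use the trivial bound $E^{t}((g_{k_a}), (h_{l_b})) \le 1$. The first task is to count the $i$-tuples $(g_{k_1}, \ldots, g_{k_i}) \in (G^*)^i$ whose popular set is contained in $A$. Since each value in $G^* \setminus A$ appears fewer than $\ga_3 n$ times, the number of positions taking a value outside $A$ is at most $m_A := \lf (p^r - 1 - a)\ga_3 n \rf$, which gives
$$
N_A(i) := \#\{\text{such $i$-tuples}\} \le \sum_{k=0}^{m_A} \binom{i}{k} (p^r - 1 - a)^k a^{i-k}.
$$
Summing against $\binom{n}{i}$ over $i \in I_3$ and swapping the order of summation via $\binom{n}{i}\binom{i}{k} = \binom{n}{k}\binom{n-k}{i-k}$ contracts the inner sum to a binomial and produces
$$
\sum_{i \in I_3} \binom{n}{i} N_A(i) \le (a+1)^n \sum_{k=0}^{m_A} \binom{n}{k} \ld^k, \qquad \ld := \frac{p^r - 1 - a}{a+1}.
$$
Provided $\ga_3 < 1/p^r$, the cut-off $m_A$ falls below the mode $n\ld/(1+\ld)$ of the summand, so the tail sum is controlled by its largest term $\binom{n}{m_A} \ld^{m_A}$; combining with the entropy bound $\binom{n}{m_A} \le 2^{n H(m_A/n)}$ used in the proof of Theorem \ref{thm21_main2b} and the fact that $m_A/n \to 0$ as $\ga_3 \to 0$ yields
$$
\sum_{i \in I_3} \binom{n}{i} N_A(i) \le (a+1)^n \exp(C(p,r) \ga_3 n)
$$
for an absolute constant $C(p,r)$, and an analogous estimate holds for the $j$-sum with $b$ in place of $a$.

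Combining the two estimates,
$$
F_{\mc{C}_2}^{I_3 \times I_3}(t) \le C'_{p,r} \left( \frac{(a+1)(b+1)}{p^r} \right)^n \exp(2 C(p,r) \ga_3 n) \le C'_{p,r} \exp \left( -\frac{n}{p^r} + O(\ga_3 n) \right),
$$
which converges to $0$ once $\ga_3$ is taken small enough that the coefficient of $n$ in the $O(\ga_3 n)$ term is strictly smaller than $1/p^r$. The main obstacle is handling the tail sum $\sum_{k=0}^{m_A} \binom{n}{k} \ld^k$: establishing that it is $\exp(O(\ga_3 n))$, rather than growing like $(1+\ld)^n$, demands that $m_A$ lie on the increasing side of the summand, which further tightens the admissible range of $\ga_3$ beyond the constraints $\ga_3 < \ga_2/(p^r - 1)$ and $\ga_3 < 1/(2p^r)$ already imposed in Section \ref{Sub23}.
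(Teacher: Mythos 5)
Your proof follows essentially the same strategy as the paper's: discard the factor $E^{t}((g_{k_a}),(h_{l_b}))$ via the trivial bound $\le 1$, decompose over the finitely many admissible pairs $(A,B)$ (or triples $(V,A,B)$), bound the number of $i$-tuples whose popular set is a given $A_0$ by separating popular from unpopular positions and invoking an entropy-type binomial estimate, and then let the strict inequality $(|A|+1)(|B|+1) \le p^r - 1$ supply the geometric factor $((p^r-1)/p^r)^n$ that dominates once $\ga_3$ is chosen small. The only genuine difference is bookkeeping: where the paper expands the count multinomially and bounds it directly by $p^{r\ga_4 n}\cdot\ga_4 n\binom{n}{\lf\ga_4 n\rf}|A_0|^i$, you contract the double sum via $\binom{n}{i}\binom{i}{k}=\binom{n}{k}\binom{n-k}{i-k}$ and then estimate the truncated binomial tail $\sum_{k\le m_A}\binom{n}{k}\ld^k$ by its largest term (valid since $m_A$ sits on the increasing side of the summand when $\ga_3<1/p^r$). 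One small imprecision: the entropy contribution $2^{nH(m_A/n)}$ scales like $\exp(O(\ga_3\log(1/\ga_3))\,n)$ rather than $\exp(O(\ga_3)n)$, so the constant $C(p,r)$ in your bound is not independent of $\ga_3$; since $\ga_3\log(1/\ga_3)\to 0$ as $\ga_3\to 0$ this does not affect the conclusion, but the statement should be phrased with a function of $\ga_3$ tending to zero rather than a fixed constant times $\ga_3$.
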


\begin{proof}
Let $S$ be the set of tuples $(A_0, B_0)$ such that $A_0$ and $B_0$ are nonempty subsets of $G^*$, $g \cdot h = 0$ for every $(g,h) \in A_0 \times B_0$ and $(\lt A_0 \rt+1)(\lt B_0 \rt+1) < p^r$. 
Then
\begin{align*}
F_{\mc{C}_2}^{I_3 \times I_3}(t) & = \sum_{(A_0,B_0) \in S} \frac{1}{p^{rn}}\sum_{i,j \in I_3} \sum_{\substack{1 \le k_1 < \cdots < k_i \le n \\ 1 \le l_1 < \cdots < l_j \le n}} \sum_{\substack{g_{k_1}, \ldots, g_{k_i} \in G^* \\ \left< g_{k_1}, \ldots, g_{k_i} \right> = G \\ A_{g_{k_1}, \ldots, g_{k_i}} = A_0}}
\sum_{\substack{h_{l_1}, \ldots, h_{l_j} \in G^* \\ B_{h_{l_1}, \ldots, h_{l_j}} = B_0}} E^{t}((g_{k_a}), (h_{l_b})) \\
& \le \sum_{(A_0,B_0) \in S} \frac{1}{p^{rn}}\sum_{i,j \in I_3} \sum_{\substack{1 \le k_1 < \cdots < k_i \le n \\ 1 \le l_1 < \cdots < l_j \le n}} \sum_{\substack{g_{k_1}, \ldots, g_{k_i} \in G^* \\ A_{g_{k_1}, \ldots, g_{k_i}} = A_0}}
\sum_{\substack{h_{l_1}, \ldots, h_{l_j} \in G^* \\ B_{h_{l_1}, \ldots, h_{l_j}} = B_0}} 1 \\
& = \frac{1}{p^{rn}} \sum_{(A_0,B_0) \in S}
\left( \sum_{i \in I_3} \sum_{1 \le k_1 < \cdots < k_i \le n} 
\lt \left\{ (g_{k_1}, \ldots, g_{k_i}) \in (G^*)^i : A_{g_{k_1}, \ldots, g_{k_i}} = A_0 \right\} \rt \right) \\
& \;\;\; \left( \sum_{j \in I_3} \sum_{1 \le l_1 < \cdots < l_j \le n} 
\lt \left\{ (h_{l_1}, \ldots, h_{l_j}) \in (G^*)^j : B_{h_{l_1}, \ldots, h_{l_j}} = B_0 \right\} \rt \right) \\
& = \frac{1}{p^{rn}} \sum_{(A_0,B_0) \in S}
\left( \sum_{i \in I_3} \binom{n}{i} 
\lt \left\{ (g_{1}, \ldots, g_{i}) \in (G^*)^i : A_{g_{1}, \ldots, g_{i}} = A_0 \right\} \rt \right) \\
& \;\;\; \left( \sum_{j \in I_3} \binom{n}{j} 
\lt \left\{ (h_{1}, \ldots, h_{j}) \in (G^*)^j : B_{h_{1}, \ldots, h_{j}} = B_0 \right\} \rt \right).
\end{align*}
The last equality holds since the sizes of the sets
$$
\left\{ (g_{k_1}, \ldots, g_{k_i}) \in (G^*)^i : A_{g_{k_1}, \ldots, g_{k_i}} = A_0 \right\} \text{ and } \left\{ (h_{l_1}, \ldots, h_{l_j}) \in (G^*)^j : B_{h_{l_1}, \ldots, h_{l_j}} = B_0 \right\}
$$
does not depend on the indices $k_1, \ldots, k_i$ and $l_1, \ldots, l_j$.

Write $\ga_4 := p^r \ga_3$. Then $\ga_4 < \frac{1}{2}$ by the assumption $\ga_3 < \frac{1}{2p^r}$. 
Fix an element $(A_0,B_0) \in S$ and write $A_0^c := G^* \setminus A_0$ and $B_0^c := G^* \setminus B_0$.
If $(g_{1}, \ldots, g_{i}) \in (G^*)^i$ satisfies $A_{g_{1}, \ldots, g_{i}} = A_0$, then $i_g \ge \ga_3 n$ for every $g \in A_0$ and $i_g < \ga_3 n$ for every $g \in A_0^c$. Therefore
\begin{align*}
& \lt \left\{ (g_{1}, \ldots, g_{i}) \in (G^*)^i : A_{g_{1}, \ldots, g_{i}} = A_0 \right\} \rt \\
= \, & \sum_{\substack{\sum_{g \in G^*} i_g = i \\ i_g \ge \ga_3 n \text{ for all } g \in A_0 \\ i_g < \ga_3 n \text{ for all } g \in A_0^c}} \frac{i!}{\prod_{g \in G^*} i_g!} \\
\le \, & \sum_{\substack{(i_g)_{g \in A_0^c} \\ i_g < \ga_3 n \text{ for all } g \in A_0^c}} \frac{\left( \sum_{g \in A_0^c} i_g \right)!}{\prod_{g \in A_0^c} i_g!} 
\sum_{\substack{(i_g)_{g \in A_0} \\ \sum_{g \in A_0} i_g = i - \sum_{g \in A_0^c} i_g}} \frac{i!}{\left( \sum_{g \in A_0^c} i_g \right)! \prod_{g \in A_0} i_g!} \\
\le \, & \sum_{i_{\mr{sm}} < \lt A_0^c \rt \ga_3 n} \lt A_0^c \rt^{i_{\mr{sm}}} 
\sum_{\substack{(i_g)_{g \in A_0} \\ \sum_{g \in A_0} i_g = i - i_{\mr{sm}}}} \frac{i!}{i_{\mr{sm}}! \prod_{g \in A_0} i_g!} \\
\le \, & \sum_{i_{\mr{sm}} < p^r \ga_3 n} (p^r)^{i_{\mr{sm}}} \binom{i}{i_{\mr{sm}}}
\sum_{\substack{(i_g)_{g \in A_0} \\ \sum_{g \in A_0} i_g = i - i_{\mr{sm}}}} \frac{(i-i_{\mr{sm}})!}{\prod_{g \in A_0} i_g!} \\
\le \, & \sum_{i_{\mr{sm}} < \ga_4 n} (p^r)^{i_{\mr{sm}}} \binom{n}{i_{\mr{sm}}}
\lt A_0 \rt^{i-i_{\mr{sm}}} \\
\le \, & p^{r \ga_4 n} \cdot \ga_4 n \binom{n}{\lf \ga_4 n \rf} \lt A_0 \rt^{i}.
\end{align*}
In the above inequality, the second inequality follows by taking $i_{\mr{sm}} = \sum_{g \in A_0^c} i_g$ and the last inequality follows from the condition $\ga_4 < \frac{1}{2}$.

By \cite[Example 12.1.3]{CT06}, 
$$
\binom{n}{\lf \ga_4 n \rf} < \frac{1}{(\ga_4^{\ga_4}(1-\ga_4)^{1-\ga_4})^n}
$$
so we have
\begin{align*}
& \sum_{i \in I_3} \binom{n}{i} 
\lt \left\{ (g_{1}, \ldots, g_{i}) \in (G^*)^i : A_{g_{1}, \ldots, g_{i}} = A_0 \right\} \rt \\
\le \, & \sum_{i \in I_3} \binom{n}{i} p^{r \ga_4 n} \cdot \ga_4 n \frac{1}{(\ga_4^{\ga_4}(1-\ga_4)^{1-\ga_4})^n} \lt A_0 \rt^{i} \\
\le \, & (\lt A_0 \rt+1)^n \left ( \frac{p^{r \ga_4}}{\ga_4^{\ga_4}(1-\ga_4)^{1-\ga_4}} \right)^n \ga_4 n.
\end{align*}
By the same reason, 
\begin{align*}
\sum_{j \in I_3} \binom{n}{j} 
\lt \left\{ (h_{1}, \ldots, h_{j}) \in (G^*)^j : B_{h_{1}, \ldots, h_{j}} = B_0 \right\} \rt 
\le  (\lt B_0 \rt+1)^n \left ( \frac{p^{r \ga_4}}{\ga_4^{\ga_4}(1-\ga_4)^{1-\ga_4}} \right)^n \ga_4 n
\end{align*}
so the condition $(\lt A_0 \rt+1)(\lt B_0 \rt+1) < p^r$ implies that
\begin{equation} \label{eq23a}
\begin{split}
F_{\mc{C}_2}^{I_3 \times I_3}(t)
& \le \frac{1}{p^{rn}} \sum_{(A_0, B_0) \in S} (\lt A_0 \rt+1)^n (\lt B_0 \rt+1)^n \left ( \frac{p^{r \ga_4}}{\ga_4^{\ga_4}(1-\ga_4)^{1-\ga_4}} \right)^{2n} (\ga_4 n)^2 \\ 
& \le \lt S \rt \frac{(p^r-1)^n}{p^{rn}} \left ( \frac{p^{r \ga_4}}{\ga_4^{\ga_4}(1-\ga_4)^{1-\ga_4}} \right)^{2n} (\ga_4 n)^2.
\end{split}
\end{equation}
Now we may choose $\ga_3$ sufficiently small so that $\ga_4 = p^r \ga_3$ satisfies 
$$
\frac{p^r-1}{p^r} \cdot \left ( \frac{p^{r \ga_4}}{\ga_4^{\ga_4}(1-\ga_4)^{1-\ga_4}} \right )^2 < 1.
$$
In this case, the last term of \eqref{eq23a} converges to $0$ as $n \to \infty$. 
\end{proof}

\subsection{Proof of Theorem \ref{thm21_main2c}: Case \texorpdfstring{$\mc{C}_3$}{C3}} \label{Sub24}

Now it remains to show that $F_{\mc{C}_3}^{I_3 \times I_3}(t)$ converges to $0$ as $n \to \infty$ for every $t \in (\fp^*)^{[n] \times [n]}$. 
Assume that the condition $\mc{C}_3$ is satisfied, and let $V$ (resp. $W$) be the $\fp$-subspace of $G$ generated by the set $A$ (resp. $B$). Then 
$$
\lt V \rt \lt W \rt \ge (\lt A \rt + 1)(\lt B \rt + 1) = p^r
$$
and $g \cdot h = 0$ for every $g \in V$ and $h \in W$. This is possible only when $A = V \setminus \{ 0 \}$, $B = W \setminus \{ 0 \}$ and $W=V^{\perp}$, where $V^{\perp}$ denotes the orthogonal complement of $V$ in $G$. Since $A$ and $B$ are nonempty, the subspace $V$ is neither the zero subspace nor the whole space $G$.

Choose a sufficiently large constant $\ga_5>1$ such that 
\begin{equation} \label{eq24_ga5}
\exp \left(\frac{8c_1\ga_5 \ga_3}{p^2} \right) \ge 2p^r.
\end{equation}
Let $A^c = G^* \setminus A$ and $B^c = G^* \setminus B$.
When the condition $\mc{C}_3$ is satisfied, exactly one of the following holds:
\begin{enumerate}
    \item[($\mc{C}_{3,a}$)] $i_g > \ga_5 \frac{n}{\log n}$ for some $g \in A^c$, or $j_h > \ga_5 \frac{n}{\log n}$ for some $h \in B^c$;
    
    \item[($\mc{C}_{3,b}$)] $\mc{C}_{3,a}$ does not hold, and $\sum_{g \in A^c} i_g > \ga_1 \frac{n}{\log n}$ or $\sum_{h \in B^c} j_h > \ga_1 \frac{n}{\log n}$;
    
    \item[($\mc{C}_{3,c}$)] $\sum_{g \in A^c} i_g \le \ga_1 \frac{n}{\log n}$ and $\sum_{h \in B^c} j_h \le \ga_1 \frac{n}{\log n}$.
\end{enumerate}
Then
$$
F_{\mc{C}_3}^{I_3 \times I_3}(t) = F_{\mc{C}_{3,a}}^{I_3 \times I_3}(t) + F_{\mc{C}_{3,b}}^{I_3 \times I_3}(t) + F_{\mc{C}_{3,c}}^{I_3 \times I_3}(t)
$$
so it is enough to prove that each of $F_{\mc{C}_{3,a}}^{I_3 \times I_3}(t)$, $F_{\mc{C}_{3,b}}^{I_3 \times I_3}(t)$ and $F_{\mc{C}_{3,c}}^{I_3 \times I_3}(t)$ converges to $0$ as $n \to \infty$.

\begin{prop} \label{prop_case3a}
For every $t \in (\fp^*)^{[n] \times [n]}$, $\underset{n \to \infty}{\lim} F_{\mc{C}_{3,a}}^{I_3 \times I_3}(t) = 0$.
\end{prop}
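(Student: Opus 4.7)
The plan is to exploit the duality built into case $\mc{C}_3$. Under $\mc{C}_3$ there is an $\fp$-subspace $V$ with $\{0\} \subsetneq V \subsetneq G$ such that $A = V \setminus \{0\}$ and $B = V^{\perp} \setminus \{0\}$. Since the standard bilinear form on $G = \fp^r$ is non-degenerate, $(V^{\perp})^{\perp} = V$, so any $g \in A^c = G^* \setminus (V \setminus \{0\})$ lies outside $V$ and therefore admits some $h \in V^{\perp}$ with $g \cdot h \ne 0$; such an $h$ is automatically nonzero (since $g \cdot 0 = 0$), hence lies in $B$. Symmetrically, any $h \in B^c$ admits a $g \in A$ with $g \cdot h \ne 0$.

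The key step will be to convert the $\mc{C}_{3,a}$ hypothesis into an exponential bound on $E^{t}$. By symmetry between the two defining clauses, I would assume $i_g > \ga_5 \frac{n}{\log n}$ for some $g \in A^c$ and pick $h \in B$ via the duality above, so that $j_h \ge \ga_3 n$. Since $c(m) \le 1$ for all $m$, keeping only the $i_g j_h$ factors indexed by the pairs $(a,b)$ with $g_{k_a}=g$ and $h_{l_b}=h$, and applying Lemma \ref{lem2b}, gives
$$
E^{t}((g_{k_a}),(h_{l_b})) \le \left(1 - \frac{\log n}{n}\cdot\frac{8c_1}{p^2}\right)^{i_g j_h} \le \exp\!\left(-\frac{8 c_1 \ga_5 \ga_3}{p^2}\, n\right) \le (2p^r)^{-n},
$$
the last inequality being exactly the defining property \eqref{eq24_ga5} of $\ga_5$.

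Once this uniform bound on $E^{t}$ is in hand, the rest is bookkeeping. I would union-bound over the finitely many (depending only on $p, r$) choices of the subspace $V$ and of the witness pair $(g,h)$, drop the spanning constraint and the defining conditions on $A_{g_{k_1},\ldots,g_{k_i}}$ and $B_{h_{l_1},\ldots,h_{l_j}}$, and count the tuples on each side by $\sum_{i=0}^{n} \binom{n}{i}(p^r-1)^i = p^{rn}$. This yields
$$
F_{\mc{C}_{3,a}}^{I_3 \times I_3}(t) \le \frac{C_{p,r}\cdot p^{rn}\cdot p^{rn}}{p^{rn}}\cdot (2p^r)^{-n} = C_{p,r}\cdot 2^{-n}
$$
for some constant $C_{p,r}$, which tends to $0$ as $n \to \infty$.

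The only delicate point is converting the single-index hypothesis on $g \in A^c$ (respectively $h \in B^c$) into a statement about a \emph{pair} $(g,h)$ with $g \cdot h \ne 0$, and this is exactly what the non-degeneracy identity $(V^{\perp})^{\perp}=V$ achieves; afterwards the choice of $\ga_5$ in \eqref{eq24_ga5} has been engineered precisely so that the exponential decay of $E^{t}$ strictly beats the combinatorial factor $p^{rn}$, reducing the final estimate to a trivial geometric bound.
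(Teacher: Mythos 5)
Your proposal is correct and follows essentially the same route as the paper: extract, via non-degeneracy of the pairing, a witness pair $(g,h)$ with $g \in A^c$, $h \in B$ (or symmetrically), $g\cdot h \ne 0$, and $i_g j_h \gtrsim \ga_5\ga_3\, n^2/\log n$; then apply Lemma~\ref{lem2b} and the calibration~\eqref{eq24_ga5} to get the uniform bound $E^t \le (2p^r)^{-n}$, and finally bound the number of tuples crudely by $p^{rn}\cdot p^{rn}$. The only cosmetic difference is your extra union bound over $V$ and the witness pair, which is unnecessary (the bound on $E^t$ is already uniform over all tuples in case~$\mc{C}_{3,a}$) and merely introduces a harmless constant $C_{p,r}$; the paper simply drops all constraints and counts directly, getting $F_{\mc{C}_{3,a}}^{I_3\times I_3}(t) \le 2^{-n}$.
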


\begin{proof}
Assume that the condition $\mc{C}_{3,a}$ is satisfied. If $i_g > \ga_5 \frac{n}{\log n}$ for some $g \in A^c = G \setminus V$, then there exists $h \in B = G \setminus V^{\perp}$ such that $g \cdot h \ne 0$. By Lemma \ref{lem2b} and \eqref{eq24_ga5}, 
$$
E^{t}((g_{k_a}), (h_{l_b})) 
\le \left( 1 - \frac{\log n}{n} \frac{8c_1}{p^2} \right)^{\ga_5 \frac{n}{\log n} \cdot \ga_3 n}
\le \exp \left(- \frac{8c_1\ga_5 \ga_3}{p^2} n \right)
\le \frac{1}{(2p^r)^n}.
$$
The same inequality holds if $j_h > \ga_5 \frac{n}{\log n}$ for some $h \in B^c$. Now
\begin{align*}
F_{\mc{C}_{3,a}}^{I_3 \times I_3}(t)
\le \frac{1}{p^{rn}}\sum_{i,j \in I_3} \sum_{\substack{1 \le k_1 < \cdots < k_i \le n \\ 1 \le l_1 < \cdots < l_j \le n}} \sum_{g_{k_1}, \ldots, g_{k_i} \in G^*}
\sum_{\substack{h_{l_1}, \ldots, h_{l_j} \in G^*}} \frac{1}{(2p^r)^n}
\le \frac{1}{p^{rn}} (p^{rn})^2 \frac{1}{(2p^r)^n} = \frac{1}{2^n}
\end{align*}
so $\underset{n \to \infty}{\lim} F_{\mc{C}_{3,a}}^{I_3 \times I_3}(t) = 0$.
\end{proof}

In the next two propositions, we use the following notation. Let $S$ be the set of all proper nontrivial $\fp$-subspaces of $G$. For an $\fp$-subspace $V$ of $G$, write $V^* := V \setminus \{ 0 \}$.

\begin{prop} \label{prop_case3b}
For every $t \in (\fp^*)^{[n] \times [n]}$, $\underset{n \to \infty}{\lim} F_{\mc{C}_{3,b}}^{I_3 \times I_3}(t) = 0$.
\end{prop}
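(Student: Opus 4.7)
\emph{Proof plan.} Without loss of generality, by the transpose symmetry $t \mapsto t^\vee$ (which swaps the roles of the $(g_{k_a})$ and $(h_{l_b})$ tuples), it suffices to bound the contribution from configurations satisfying $\sum_{g \in A^c} i_g > \ga_1 n/\log n$; the complementary case $\sum_{h \in B^c} j_h > \ga_1 n/\log n$ reduces to the first via $t^\vee$. Decompose further by the proper nontrivial subspace $V \in S$ with $A = V^*$ and $B = V^{\perp,*}$; since $|S|$ is bounded in $n$, it suffices to show each partial sum $F_V(t) \to 0$. Dropping the constraint $\langle g_{k_a}\rangle = G$ and all constraints on the $(h_{l_b})$'s except $h_{l_b} \in G^*$, the sum over $j$, $(l_b)$, and $(h_{l_b})$ factors over $l$; using $1 + \sum_{h_l \in G^*}\prod_a c = \sum_{h_l \in G}\prod_a c$ gives
\[
F_V(t) \le \frac{1}{p^{rn}}\sum_{(g_{k_a})}\prod_{l=1}^{n}\sum_{h_l \in G}\prod_a c(t_{k_a l}(g_{k_a}\cdot h_l)),
\]
where the outer sum is over valid $(g_{k_a})$-configurations (satisfying $A = V^*$, $\mc{C}_{3,a}$ failing on $g$'s, and $\sum_{g \in V^c}i_g > \ga_1 n/\log n$).

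The crux is a uniform upper bound on the single-column sum $\Sigma_l := \sum_{h \in G}\prod_a c(t_{k_a l}(g_{k_a}\cdot h))$, obtained by splitting $G = V^\perp \sqcup (G\setminus V^\perp)$. Write $\mc{I}_b := \{a : g_{k_a}\in V^c\}$, so $|\mc{I}_b| > \ga_1 n/\log n$. For $h \in V^\perp$: only indices in $\mc{I}_b$ contribute (the good indices have $g_{k_a}\cdot h = 0$); selecting any subset $\mc{I}_b' \subseteq \mc{I}_b$ with $|\mc{I}_b'| = \lfloor \ga_1 n/\log n\rfloor$ and observing that $\sum_{h \in V^\perp}\sin^2(\pi t_{k_a l}(g_{k_a}\cdot h)/p) = |V^\perp|/2$ for each $a \in \mc{I}_b'$ (since $g_{k_a}\notin V = (V^\perp)^\perp$), the second-order Taylor argument from the proof of Theorem \ref{thm21_main2a} applied to $\mc{I}_b'$ yields $\sum_{h \in V^\perp}\prod_a c \le |V^\perp|(1 - c_2\ga_1/2)$ for $n$ sufficiently large. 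For $h \in G\setminus V^\perp$: the map $g \mapsto g\cdot h$ is a surjective linear functional from $V$ to $\fp$, so $|V\setminus h^\perp| = (p-1)|V|/p$, and the constraint $A = V^*$ (i.e., $i_g \ge \ga_3 n$ for every $g \in V^*$) forces at least $(p-1)\ga_3 n$ good-row indices $a$ with $g_{k_a}\cdot h \ne 0$; Lemma \ref{lem2b} then gives $\prod_a c \le n^{-\beta}$ uniformly in $h$, where $\beta := 8(p-1)c_1\ga_3/p^2 > 0$. Combining the two contributions yields $\Sigma_l \le |V^\perp|(1 - c_2\ga_1/4)$ for $n$ large, whence $\prod_l\Sigma_l \le |V^\perp|^n\exp(-c_2\ga_1 n/4)$.

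It remains to bound the number of valid $(g_{k_a})$-configurations. Dropping the $i_g \ge \ga_3 n$ constraint only enlarges this count to $\sum_{|\mc{I}_b|}\binom{n}{|\mc{I}_b|}|V^c|^{|\mc{I}_b|}|V|^{n-|\mc{I}_b|}$ restricted to $|\mc{I}_b| \le (p^r-1)\ga_5 n/\log n$; this equals $p^{rn}\cdot\Pr[\text{Binom}(n,|V^c|/p^r) \le (p^r-1)\ga_5 n/\log n]$. Since $|V^c|/p^r \ge (p-1)/p$ is bounded away from zero while the truncation level is $o(n)$ well below the binomial mean, a Chernoff / Kullback--Leibler bound yields $\Pr \le (|V|/p^r)^n e^{o(n)}$, so the count is at most $|V|^n e^{o(n)}$. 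Using $|V|\cdot|V^\perp| = p^r$,
\[
F_V(t) \le \frac{|V|^n e^{o(n)}\cdot|V^\perp|^n\exp(-c_2\ga_1 n/4)}{p^{rn}} = e^{o(n)}\exp(-c_2\ga_1 n/4) \to 0,
\]
and summing over $V \in S$ together with the symmetric contribution from $t^\vee$ completes the proof. The main obstacle is the single-column estimate: since $|\mc{I}_b|$ can exceed $\ga_1 n/\log n$ by a factor of up to $\log n$, the Taylor-expansion argument of Theorem \ref{thm21_main2a} cannot be applied to all of $\mc{I}_b$ at once; restricting to a size-$\lfloor \ga_1 n/\log n\rfloor$ subset is necessary, but then the sum over $h \in G \setminus V^\perp$ needs an independent bound, and this forces one to exploit the full structural constraint $A = V^*$ to ensure enough nonzero dot products arise from the good rows.
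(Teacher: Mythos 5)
Your proof is correct, but it takes a genuinely different route from the paper's.

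The paper's approach is more direct: under $\mc{C}_{3,b}$, pigeonhole on $A^c$ (which has fewer than $p^r$ elements) produces a single $g \in A^c$ with $i_g > \frac{\ga_1}{p^r}\frac{n}{\log n}$, and then since $g \notin V$ there is some $h \in B = (V^\perp)^*$ (so $j_h \ge \ga_3 n$) with $g\cdot h \ne 0$. This single block alone forces $E^t((g_{k_a}),(h_{l_b})) \le (1 - \frac{\log n}{n}\frac{8c_1}{p^2})^{i_g j_h} \le K^{-n}$ for a constant $K>1$, \emph{uniformly} over all configurations in $\mc{C}_{3,b}$ with a given $V$. The rest of the argument is purely a count of admissible $(g_{k_a})$ and $(h_{l_b})$ tuples under the $\mc{C}_{3,a}$-failure ceilings $i_g, j_h \le \ga_5\frac{n}{\log n}$, giving $e^{o(n)}|V|^n$ and $e^{o(n)}|V^\perp|^n$ respectively, so the whole thing is $\frac{e^{o(n)}|V|^n|V^\perp|^n}{K^n p^{rn}} = e^{o(n)}K^{-n} \to 0$.

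Your approach instead symmetrizes via $t^\vee$ to reduce to the case $\sum_{g\in A^c}i_g > \ga_1\frac{n}{\log n}$, drops all the $h$-side constraints, factors the $h$-sum column by column, and derives a per-column bound $\Sigma_l \le |V^\perp|(1 - c_2\ga_1/4)$ by splitting $h \in V^\perp$ (Taylor argument on a size-$\lfloor\ga_1 n/\log n\rfloor$ subset of the ``bad'' rows $\mc{I}_b$, exactly the $I_1$-range trick from Theorem \ref{thm21_main2a}) versus $h \notin V^\perp$ (where $A = V^*$ guarantees at least $(p-1)\ga_3 n$ nonzero dot products, so $\prod_a c = O(n^{-\beta})$). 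Your counting of the $g$-configurations via a Chernoff/KL estimate on $\Pr[\mathrm{Binom}(n,|V^c|/p^r) \le o(n)]$ is a clean alternative to the paper's multinomial-coefficient bound, and lands on the same $e^{o(n)}|V|^n$. Both arguments close the same way using $|V||V^\perp|=p^r$.

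The trade-off: the paper's single-block bound is shorter and manifestly symmetric in $g$'s and $h$'s, so no transposition step is needed; it also makes the counting the only real work. Your factored column-sum argument is heavier per column but structurally parallel to the $I_1$ and $I_2$ estimates earlier in the paper, and it cleanly isolates where each constraint ($|\mc{I}_b|>\ga_1 n/\log n$ for the $V^\perp$ part, $A=V^*$ for the complement) is actually used — you correctly identified that the Taylor argument cannot be applied to all of $\mc{I}_b$ at once since $|\mc{I}_b|$ may be as large as $\Theta(n/\log n)\cdot\log n$-ish multiples beyond $\ga_1 n/\log n$, and the restriction to a size-$\lfloor\ga_1 n/\log n\rfloor$ subset plus the separate treatment of $h\notin V^\perp$ is exactly the right fix. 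Minor quibbles: the constant $|V^\perp|(1-c_2\ga_1/2)$ for the $V^\perp$ part is $|V^\perp|(1 - \frac{c_2\log n}{n}\lfloor\ga_1 n/\log n\rfloor)$ before taking $n$ large, and the ceiling on $|\mc{I}_b|$ should be $(p^r-|V|)\ga_5 n/\log n$ rather than $(p^r-1)\ga_5 n/\log n$; neither affects the argument since both are $o(n)$.
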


\begin{proof}
Assume that the condition $\mc{C}_{3,b}$ is satisfied. Then either $i_g > \frac{\ga_1}{p^r} \frac{n}{\log n}$ for some $g \in A^c$, or $j_h> \frac{\ga_1}{p^r} \frac{n}{\log n}$ for some $h \in B^c$. Suppose that $i_g > \frac{\ga_1}{p^r} \frac{n}{\log n}$ for some $g \in A^c$ and choose $h \in B$ such that $g \cdot h \ne 0$. By Lemma \ref{lem2b},
$$
E^{t}((g_{k_a}), (h_{l_b})) 
\le \left( 1 - \frac{\log n}{n} \frac{8c_1}{p^2} \right)^{\frac{\ga_1}{p^r} \frac{n}{\log n} \cdot \ga_3 n}
\le \exp \left( - \frac{8c_1\ga_1 \ga_3}{p^{r+2}} n \right)
= \frac{1}{K^n}
$$
for $K := \exp \left( \frac{8c_1\ga_1 \ga_3}{p^{r+2}} \right) > 1$. The same bound applies when $j_h> \frac{\ga_1}{p^r} \frac{n}{\log n}$ for some $h \in B^c$.
Thus
\begin{equation} \label{eq24_3b1}
\begin{split}
F_{\mc{C}_{3,b}}^{I_3 \times I_3}(t)
& \le \sum_{V \in S} \frac{1}{p^{rn}}\sum_{i,j \in I_3} \sum_{\substack{1 \le k_1 < \cdots < k_i \le n \\ 1 \le l_1 < \cdots < l_j \le n}} \sum_{\substack{g_{k_1}, \ldots, g_{k_i} \in G^* \\ A_{g_{k_1}, \ldots, g_{k_i}} = V^* \\ i_g \le \ga_5 \frac{n}{\log n} \text{ for all } g \in G \setminus V}}
\sum_{\substack{h_{l_1}, \ldots, h_{l_j} \in G^* \\ B_{h_{l_1}, \ldots, h_{l_j}} = (V^{\perp})^* \\ j_h \le \ga_5 \frac{n}{\log n} \text{ for all } h \in G \setminus V^{\perp}}} \frac{1}{K^n} \\
& = \frac{1}{K^n p^{rn}} \sum_{V \in S} 
\left( \sum_{i \in I_3} \sum_{1 \le k_1 < \cdots < k_i \le n} \# \left\{ \begin{matrix}
(g_{k_1}, \ldots, g_{k_i}) \in (G^*)^i : A_{g_{k_1}, \ldots, g_{k_i}} = V^* \\
\text{ and } i_g \le \ga_5 \frac{n}{\log n} \text{ for all } g \in G \setminus V \end{matrix} \right\} \right) \\
& \;\;\; 
\left( \sum_{j \in I_3} \sum_{1 \le l_1 < \cdots < l_j \le n} \# \left\{ \begin{matrix}
(h_{l_1}, \ldots, h_{l_j}) \in (G^*)^j : B_{h_{l_1}, \ldots, h_{l_j}} = (V^{\perp})^* \\
\text{ and } j_h \le \ga_5 \frac{n}{\log n} \text{ for all } h \in G \setminus V^{\perp} \end{matrix} \right\} \right).
\end{split}
\end{equation}
By applying the same argument used in the proof of Proposition \ref{prop_case2} to bound the size of the set
$$
\left\{ (g_{1}, \ldots, g_{i}) \in (G^*)^i : A_{g_{1}, \ldots, g_{i}} = A_0 \right\},
$$
we deduce that
\begin{align*}
\# \left\{
\begin{matrix}
(g_{k_1}, \ldots, g_{k_i}) \in (G^*)^i : A_{g_{k_1}, \ldots, g_{k_i}} = V^* \\
\text{ and } i_g \le \ga_5 \frac{n}{\log n} \text{ for all } g \in G \setminus V
\end{matrix} \right\} 
& \le \sum_{\substack{\sum_{g \in G^*} i_g = i \\ i_g \le \ga_5 \frac{n}{\log n} \text{ for all } g \in G \setminus V}} \frac{i!}{\prod_{g \in G^*} i_g!} \\
& \le p^{r p^r \ga_5 \frac{n}{\log n}} \cdot p^r \ga_5 \frac{n}{\log n} \binom{n}{\left \lfloor p^r \ga_5 \frac{n}{\log n} \right \rfloor} \left | V^* \right|^i
\end{align*}
for each $i \in I_3$ and $1 \le k_1 < \cdots < k_i \le n$. Hence
\begin{equation} \label{eq24_3b2}
\begin{split}
& \sum_{i \in I_3} \sum_{1 \le k_1 < \cdots < k_i \le n} \# \left\{ \begin{matrix}
(g_{k_1}, \ldots, g_{k_i}) \in (G^*)^i : A_{g_{k_1}, \ldots, g_{k_i}} = V^* \\
\text{ and } i_g \le \ga_5 \frac{n}{\log n} \text{ for all } g \in G \setminus V \end{matrix} \right\} \\
\le \, & p^{r p^r \ga_5 \frac{n}{\log n}} \cdot p^r \ga_5 \frac{n}{\log n} \binom{n}{\left \lfloor p^r \ga_5 \frac{n}{\log n} \right \rfloor} \sum_{i=0}^{n} \binom{n}{i}\left | V^* \right|^i \\
= \, & p^{r p^r \ga_5 \frac{n}{\log n}} \cdot p^r \ga_5 \frac{n}{\log n} \binom{n}{\left \lfloor p^r \ga_5 \frac{n}{\log n} \right \rfloor} \left | V \right|^n.
\end{split}
\end{equation}
By the same reason, we have
\begin{equation} \label{eq24_3b3}
\begin{split}
& \sum_{j \in I_3} \sum_{1 \le l_1 < \cdots < l_j \le n} \# \left\{ \begin{matrix}
(h_{l_1}, \ldots, h_{l_j}) \in (G^*)^j : B_{h_{l_1}, \ldots, h_{l_j}} = (V^{\perp})^* \\
\text{ and } j_h \le \ga_5 \frac{n}{\log n} \text{ for all } h \in G \setminus V^{\perp} \end{matrix} \right\} \\
\le \, & p^{r p^r \ga_5 \frac{n}{\log n}} \cdot p^r \ga_5 \frac{n}{\log n} \binom{n}{\left \lfloor p^r \ga_5 \frac{n}{\log n} \right \rfloor} \left | V^{\perp} \right|^n.
\end{split}
\end{equation}

Combining \eqref{eq24_3b1}, \eqref{eq24_3b2} and \eqref{eq24_3b3}, we obtain
\begin{align*}
F_{\mc{C}_{3,b}}^{I_3 \times I_3}(t) 
& \le \frac{1}{K^n p^{rn}} \sum_{V \in S} \left( p^{r p^r \ga_5 \frac{n}{\log n}} \cdot p^r \ga_5 \frac{n}{\log n} \binom{n}{\left \lfloor p^r \ga_5 \frac{n}{\log n} \right \rfloor} \right)^2 \left | V \right|^n\left | V^{\perp} \right|^n \\
& \le \frac{\left | S \right|}{K^n} \left( p^{r p^r \ga_5 \frac{n}{\log n}} \cdot p^r \ga_5 \frac{n}{\log n} \left( \frac{en}{p^r \ga_5 \frac{n}{\log n}}\right)^{p^r \ga_5 \frac{n}{\log n}} \right)^2 \\
& \le \frac{\left | S \right|}{K^n} \left( p^{r p^r \ga_5 \frac{n}{\log n}} \cdot n (e \log n)^{p^r \ga_5 \frac{n}{\log n}} \right)^2 \\
& = \left | S \right| \left( \frac{p^{\frac{2r p^r \ga_5}{\log n}} n^{\frac{2}{n}} (e \log n)^{\frac{2p^r \ga_5}{\log n}}}{K} \right)^n,
\end{align*}
where the second inequality uses the bound $\binom{n}{k} \le \left( \frac{en}{k} \right)^k$ for all $n \ge k \ge 1$.
The last term of the above inequality converges to $0$ as $n \to \infty$, since $K>1$ and
\begin{equation*}
\lim_{n \to \infty} p^{\frac{2r p^r \ga_5}{\log n}} n^{\frac{2}{n}} (e \log n)^{\frac{2p^r \ga_5}{\log n}} = 1. \qedhere
\end{equation*}
\end{proof}

\begin{prop} \label{prop_case3c}
For every $t \in (\fp^*)^{[n] \times [n]}$, $\underset{n \to \infty}{\lim} F_{\mc{C}_{3,c}}^{I_3 \times I_3}(t) = 0$.
\end{prop}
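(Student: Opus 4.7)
The plan is to decompose $F_{\mc{C}_{3,c}}^{I_3 \times I_3}(t)$ according to (i) the proper nontrivial $\fp$-subspace $V$ of $G$ (so that $A = V^*$ and $B = (V^{\perp})^*$, with $d := \dim_{\fp} V \in [1, r-1]$), and (ii) the out-counts $u := \sum_{g \in G \setminus V} i_g$ and $v := \sum_{h \in G \setminus V^{\perp}} j_h$. The hypothesis $\mc{C}_{3,c}$ restricts $u, v \le \ga_1 n/\log n$, and since every in-$g$ lies in $V$ the spanning condition $\langle g_{k_a}\rangle = G$ forces $u \ge r - d \ge 1$.

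To estimate $E^t$, I would lower-bound the number $N$ of pairs $(a,b)$ with $g_{k_a} \cdot h_{l_b} \ne 0$ as follows. The in-in pairs contribute zero. For each out index $a$, since $g_{k_a} \notin V = (V^{\perp})^{\perp}$, the set $V^{\perp} \cap g_{k_a}^{\perp}$ is a proper subspace of $V^{\perp}$ of size $p^{r-d-1}$, so there are $p^{r-d-1}(p-1)$ elements $h \in (V^{\perp})^*$ satisfying $g_{k_a} \cdot h \ne 0$, each with $j_h \ge \ga_3 n$. Adding the symmetric contribution from the out-$h$'s (which count a disjoint family of pairs) yields
$$
N \;\ge\; u\, p^{r-d-1}(p-1)\ga_3\, n \;+\; v\, p^{d-1}(p-1)\ga_3\, n,
$$
and Lemma \ref{lem2b} then produces $E^t((g_{k_a}),(h_{l_b})) \le n^{-u\mu_d - v\nu_d}$ with $\mu_d := 8c_1 p^{r-d-3}(p-1)\ga_3$ and $\nu_d := 8c_1 p^{d-3}(p-1)\ga_3$.

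For the combinatorial count I would follow the same strategy as in the proof of Proposition \ref{prop_case2}: the number of $(g_{k_a})$-configurations with $A = V^*$ and $i^{out} = u$ (summed over $i \in I_3$ and positions $k_1 < \cdots < k_i$) is at most $\binom{n}{u}(p^{r-d}-1)^u p^{dn}$, and analogously $\binom{n}{v}(p^d-1)^v p^{(r-d)n}$ on the $h$-side. Since $p^{dn} \cdot p^{(r-d)n} = p^{rn}$ cancels the outer normalizer, I arrive at
$$
F_{\mc{C}_{3,c}}^{I_3 \times I_3}(t) \;\le\; \sum_V \left(\sum_{u \ge 1}\binom{n}{u}(p^{r-d}-1)^u n^{-u\mu_d}\right)\left(\sum_{v \ge 0}\binom{n}{v}(p^d-1)^v n^{-v\nu_d}\right),
$$
the outer sum ranging over a constant-size family of subspaces. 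Each inner sum is bounded by $(1 + y\,n^{-\text{exponent}})^n$ with $y$ constant; the sum over $u \ge 1$ (the $u = 0$ term being excluded by the spanning condition) is $\le (1 + (p^{r-d}-1) n^{-\mu_d})^n - 1 \to 0$ provided $\mu_d > 1$, and likewise for the $v$-sum.

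The hard part will be arranging the constants so that $\mu_d, \nu_d > 1$ for every $d \in [1, r-1]$ — the binding cases being $d \in \{1, r-1\}$ — while simultaneously keeping $\ga_3$ small enough to satisfy Proposition \ref{prop_case2} and small enough that $\ga_3 < \min(\ga_2/(p^r-1), 1/(2p^r))$. If the two ranges on $\ga_3$ overlap, the argument closes; otherwise, the endpoint regime $u = r-d$, $v = 0$ with $V$ a hyperplane (where the polynomial decay $n^{-\mu_d}$ barely beats the $\binom{n}{1}$-count) will require a sharper lower bound on $N$ — for instance by averaging the quantity $\sum_{h \in (V^{\perp})^* \cap g_{k_a}^{\perp}} j_h$ over the $(p-1)p^d$ choices of the unique out-$g_{k_a} \in G \setminus V$ producing any given hyperplane of $V^{\perp}$, which would rule out the adversarial $j_h$-concentration responsible for the weak bound. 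I expect this simultaneous choice of the constants $\ga_2, \ga_3, \ga_5$ and the endpoint analysis to be the technically most demanding part.
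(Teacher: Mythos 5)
Your decomposition by $V$ and by the out-counts $u, v$ is sound, and the combinatorial count via $\binom{n}{u}(p^{r-d}-1)^u p^{dn}$ matches the paper's bookkeeping in spirit. The problem is the analytic step, and you correctly flag it as the weak point: it is in fact fatal as written. Your lower bound on $N$ uses the crude estimate $c(m)\le 1-\tfrac{\log n}{n}\tfrac{8c_1}{p^2}$ for $m\ne 0$, and the resulting per-position exponent $\mu_d = 8c_1p^{r-d-3}(p-1)\ga_3$ attains its minimum at $d=r-1$, where $\mu_{r-1}=\tfrac{8c_1(p-1)\ga_3}{p^2}\le 2c_1\ga_3$. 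But the standing constraint $\ga_3 < \tfrac{1}{2p^r}$ (needed for Proposition \ref{prop_case2}) forces $\mu_{r-1} < c_1/p^r \le c_1/p^2$, which is $<1$ unless $c>2p^2-1$; the theorem is supposed to hold for all $c>1$, so the ranges never overlap. Your proposed fix of averaging $\sum_h j_h$ over out-choices of $g_{k_a}$ does not rescue this: the $h$'s with $j_h\ge\ga_3 n$ are \emph{all} of $(V^\perp)^*$ (that is what $B=(V^\perp)^*$ means), so there is no adversarial $j_h$-concentration to exploit, and the loss is entirely in the pointwise $c(m)$ bound, not in the counting of pairs.

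The paper's proof escapes precisely by not worst-casing $\sin^2(\pi m/p)$ at $(2/p)^2$. It keeps the refined form of Lemma \ref{lem2b}, $c(m)\le 1-\tfrac{2c_1\log n}{n}\sin^2\tfrac{\pi m}{p}$, and then \emph{averages} the $\sin^2$ analytically over the in-elements: extending $(g_{k_a})$ to a full $n$-tuple by zeros, setting $X$ and $Y$ to be the positions where $g_k\notin V$, $h_l\notin V^\perp$, it bounds $\prod_{k,l}c(m_{kl})$ by the $(X\times Y^c)\cup(X^c\times Y)$ block and then, inside the sum over $g_{x'}\in V$ for $x'\in X^c$, uses the exact identity $\sum_{g_{x'}\in V}\sin^2\tfrac{\pi t_{x'y}(g_{x'}\cdot h_y)}{p}=\tfrac{|V|}{2}$ for $h_y\notin V^\perp$. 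This replaces the worst-case factor $4/p^2$ by the average $1/2$, producing a per-position decay $n^{-c_2}$ with $c_2>1$ independently of $p$, $r$, $d$, $\ga_3$. That is the sharper bound you correctly guessed you needed, but the averaging must go over the many in-elements of $V$ (using orthogonality on the group, à la Fourier), not over the few out-elements; the latter cannot gain the factor $\Theta(p^2)$ that the argument requires.
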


\begin{proof}
Write $m_{kl} := t_{kl}(g_k \cdot h_l) \in \fp$ for simplicity. 
If $A_{g_{k_1}, \ldots, g_{k_i}} = V^*$ for a proper $\fp$-subspace $V$ of $G$ and $\left< g_{k_1}, \ldots, g_{k_i} \right> = G$, then $i_g > 0$ for some $g \in G \setminus V$. Hence
\begin{equation} \label{eq_3c1}
\begin{split}
F_{\mc{C}_{3,c}}^{I_3 \times I_3}(t)
& = \sum_{V \in S} \frac{1}{p^{rn}}\sum_{i,j \in I_3} \sum_{\substack{1 \le k_1 < \cdots < k_i \le n \\ 1 \le l_1 < \cdots < l_j \le n}}  \sum_{\substack{g_{k_1}, \ldots, g_{k_i} \in G^* \\ A_{g_{k_1}, \ldots, g_{k_i}} = V^* \\ \left< g_{k_1}, \ldots, g_{k_i} \right> = G \\ \sum_{g \in G \setminus V} i_g \le \ga_1 \frac{n}{\log n} }}
\sum_{\substack{h_{l_1}, \ldots, h_{l_j} \in G^* \\ B_{h_{l_1}, \ldots, h_{l_j}} = (V^{\perp})^* \\ \sum_{h \in G \setminus V^{\perp}} j_h \le \ga_1 \frac{n}{\log n}}} E^{t}((g_{k_a}), (h_{l_b})) \\
& \le \sum_{V \in S} \frac{1}{p^{rn}}\sum_{i,j \in I_3} \sum_{\substack{1 \le k_1 < \cdots < k_i \le n \\ 1 \le l_1 < \cdots < l_j \le n}}  \sum_{\substack{g_{k_1}, \ldots, g_{k_i} \in G^* \\ A_{g_{k_1}, \ldots, g_{k_i}} = V^* \\ 1 \le \sum_{g \in G \setminus V} i_g \le \ga_1 \frac{n}{\log n} }}
\sum_{\substack{h_{l_1}, \ldots, h_{l_j} \in G^* \\ B_{h_{l_1}, \ldots, h_{l_j}} = (V^{\perp})^* \\ \sum_{h \in G \setminus V^{\perp}} j_h \le \ga_1 \frac{n}{\log n}}} E^{t}((g_{k_a}), (h_{l_b})).
\end{split}
\end{equation}

Given $g_{k_1}, \ldots, g_{k_i} \in G^*$, extend them to elements $g_1, \ldots, g_n \in G$ by setting $g_k=0$ for every $k \in [n] \setminus \{ k_1, \ldots, k_i \}$. Similarly, extend $h_{l_1}, \ldots, h_{l_j} \in G^*$ to elements $h_1, \ldots, h_n \in G$ by setting $h_l=0$ for every $l \in [n] \setminus \{ l_1, \ldots, l_j \}$. In this notation, we define
$$
A_{g_1, \ldots, g_n} := A_{g_{k_1}, \ldots, g_{k_i}} \, \text{ and } \,
B_{h_1, \ldots, h_n} := B_{h_{l_1}, \ldots, h_{l_j}}.
$$
For each $g,h \in G^*$, let
\begin{align*}
i_g & = i_g(g_1, \ldots, g_n) := \left | \{ k \in [n] : g_k = g \} \right |, \\
j_h & = j_h(h_1, \ldots, h_n) := \left | \{ l \in [n] : h_l=h \} \right |.
\end{align*}
Then $i_g = \left | \{ a \in [i] : g_{k_a} = g \} \right |$ and $j_h = \left | \{ b \in [j] : h_{l_b} = g \} \right |$ so the definitions of $i_g$ and $j_h$ agree with their earlier definitions for $g_{k_1}, \ldots, g_{k_i} \in G^*$ and $h_{l_1}, \ldots, h_{l_j} \in G^*$.
The last term of \eqref{eq_3c1} is equal to
\begin{equation} \label{eq_3c1.5}
\frac{1}{p^{rn}} \sum_{V \in S} \sum_{i,j \in I_3}  \sum_{\substack{g_1, \ldots, g_n \in G \\ A_{g_1, \ldots, g_n} = V^* \\ \sum_{g \in G^*} i_g = i \\ 1 \le \sum_{g \in G \setminus V} i_g \le \ga_1 \frac{n}{\log n} }}
\sum_{\substack{h_1, \ldots, h_n \in G \\ B_{h_1, \ldots, h_n} = (V^{\perp})^* \\ \sum_{h \in G^*} j_h = j \\ \sum_{h \in G \setminus V^{\perp}} j_h \le \ga_1 \frac{n}{\log n}}} \prod_{k=1}^{n} \prod_{l=1}^{n} c(m_{kl}).
\end{equation}
Define
\begin{align*}
X := \{ k \in [n] : g_k \in G \setminus V \}, \,\,\, 
Y := \{ l \in [n] : h_l \in G \setminus V^{\perp} \}
\end{align*}
and write $X^c := [n] \setminus X$ and $Y^c := [n] \setminus Y$.
Then 
$$
|X| = \sum_{g \in G \setminus V} i_g \, \text{ and } \,
|Y| = \sum_{h \in G \setminus V} j_h
$$
so \eqref{eq_3c1.5} is bounded above by
\begin{equation} \label{eq_3c2}
\frac{1}{p^{rn}} \sum_{V \in S}
\sum_{\substack{X \subseteq [n] \\ 1 \le |X| \le \ga_1 \frac{n}{\log n}}} 
\sum_{\substack{Y \subseteq [n] \\ |Y| \le \ga_1 \frac{n}{\log n}}} 
\sum_{\substack{(g_x)_{x \in X} \in (G \setminus V)^X \\ (g_{x'})_{x' \in X^c} \in V^{X^c}}}
\sum_{\substack{(h_y)_{h \in Y} \in (G \setminus V^{\perp})^Y \\ (h_{y'})_{y' \in Y^c} \in (V^{\perp})^{Y^c}}}
\prod_{k=1}^{n} \prod_{l=1}^{n} c(m_{kl}).
\end{equation}

First we provide an upper bound of $\prod_{k=1}^{n} \prod_{l=1}^{n} c(m_{kl})$. From the inequality
$$
\prod_{k=1}^{n} \prod_{l=1}^{n} c(m_{kl}) \le \prod_{x \in X} \prod_{y' \in Y^c} c(m_{xy'}) \cdot \prod_{y \in Y} \prod_{x' \in X^c} c(m_{x'y}),
$$
it suffices to bound $\prod_{y \in Y} c(m_{x'y})$ for each $x' \in X^c$ and $\prod_{x \in X} c(m_{xy'})$ for each $y' \in Y^c$.
Given $x' \in X^c$, we have $g_{x'} \in V$. Thus Lemma \ref{lem2b} together with the bound $|Y| \le \ga_1 \frac{n}{\log n}$ imply that
\begin{align*}
\prod_{y \in Y} c(m_{x'y})
& \le \prod_{y \in Y} \left(1 - \frac{2c_1 \log n}{n} \sin^2 \frac{\pi m_{x'y}}{p} \right) \\
& \le \exp \left( - \frac{2c_1 \log n}{n}\sum_{y \in Y}\sin^2 \frac{\pi m_{x'y}}{p} \right) \\
& \le 1 - \frac{2c_1 \log n}{n}\sum_{y \in Y}\sin^2 \frac{\pi m_{x'y}}{p} 
+ \frac{1}{2} \left( \frac{2c_1 \log n}{n}\sum_{y \in Y}\sin^2 \frac{\pi m_{x'y}}{p} \right)^2 \\
& \le 1 - \frac{2c_1 \log n}{n} \left( 1 - \frac{c_1 \log n}{n} |Y| \right) \sum_{y \in Y}\sin^2 \frac{\pi m_{x'y}}{p} \\
& \le 1 - \frac{2c_1 \log n}{n} (1-c_1 \ga_1) \sum_{y \in Y}\sin^2 \frac{\pi m_{x'y}}{p} \\
& = 1 - \frac{2c_2 \log n}{n}\sum_{y \in Y}\sin^2 \frac{\pi m_{x'y}}{p}.
\end{align*}
Similarly, for every $y' \in Y^c$ we have
$$
\prod_{x \in X} c(m_{xy'})
\le 1 - \frac{2c_2 \log n}{n}\sum_{x \in X}\sin^2 \frac{\pi m_{xy'}}{p}.
$$
Now the above inequalities imply that \eqref{eq_3c2} is bounded above by
\begin{align*}
& \frac{1}{p^{rn}} \sum_{V \in S}
\sum_{\substack{X \subseteq [n] \\ 1 \le |X| \le \ga_1 \frac{n}{\log n}}} 
\sum_{\substack{Y \subseteq [n] \\ |Y| \le \ga_1 \frac{n}{\log n}}} 
\sum_{\substack{(g_x)_{x \in X} \in (G \setminus V)^X \\ (g_{x'})_{x' \in X^c} \in V^{X^c}}}
\sum_{\substack{(h_y)_{h \in Y} \in (G \setminus V^{\perp})^Y \\ (h_{y'})_{y' \in Y^c} \in (V^{\perp})^{Y^c}}} \\
& \prod_{x' \in X^c} \left( 1 - \frac{2c_2 \log n}{n}\sum_{y \in Y}\sin^2 \frac{\pi m_{x'y}}{p} \right) 
\prod_{y' \in Y^c} \left( 1 - \frac{2c_2 \log n}{n}\sum_{x \in X}\sin^2 \frac{\pi m_{xy'}}{p} \right) \\
\le \, & \frac{1}{p^{rn}} \sum_{V \in S}
\sum_{\substack{X \subseteq [n] \\ 1 \le |X| \le \ga_1 \frac{n}{\log n}}} 
\sum_{\substack{Y \subseteq [n] \\ |Y| \le \ga_1 \frac{n}{\log n}}} \\
& \sum_{(h_y)_{h \in Y} \in (G \setminus V^{\perp})^Y} 
\prod_{x' \in X^c} \left( \sum_{g_{x'} \in V} \left( 1 - \frac{2c_2 \log n}{n}\sum_{y \in Y}\sin^2 \frac{\pi m_{x'y}}{p} \right) \right) \\
& \sum_{(g_x)_{x \in X} \in (G \setminus V)^X}
\prod_{y' \in Y^c} \left( \sum_{h_{y'} \in V^{\perp}} \left( 1 - \frac{2c_2 \log n}{n}\sum_{x \in X}\sin^2 \frac{\pi m_{xy'}}{p} \right) \right) \\
=: \, & \frac{1}{p^{rn}} \sum_{V \in S}
\sum_{\substack{X \subseteq [n] \\ 1 \le |X| \le \ga_1 \frac{n}{\log n}}} 
\sum_{\substack{Y \subseteq [n] \\ |Y| \le \ga_1 \frac{n}{\log n}}} T(V; X,Y).
\end{align*}

For each $y \in Y$, we have $h_y \in G \setminus V^{\perp}$.
As $g$ ranges over $V$, each element of $\fp$ is attained exactly $\frac{|V|}{p}$ times by $g \cdot h_y$ so
$$
\sum_{g_{x'} \in V} \sin^2 \frac{\pi m_{x' y}}{p}
= \sum_{g_{x'} \in V} \sin^2 \frac{\pi t_{x' y}(g_{x'} \cdot h_y)}{p}
= \frac{|V|}{p} \sum_{a \in \fp} \sin^2 \frac{\pi a}{p} = \frac{|V|}{2}.
$$
Hence for every $x' \in X^c$, 
\begin{equation} \label{eq_3c4}
\begin{split}
\sum_{g_{x'} \in V} \left( 1 - \frac{2c_2 \log n}{n}\sum_{y \in Y}\sin^2 \frac{\pi m_{x'y}}{p} \right) 
& = |V| - \frac{2c_2 \log n}{n} \sum_{y \in Y} \sum_{g_{x'} \in V} \sin^2 \frac{\pi m_{x'y}}{p} \\
& = |V| \left( 1 - \frac{c_2 \log n}{n}|Y| \right).
\end{split}
\end{equation}
Similarly, we obtain
\begin{equation} \label{eq_3c5}
\begin{split}
\sum_{h_{y'} \in V^{\perp}} \left( 1 - \frac{2c_2 \log n}{n}\sum_{x \in X}\sin^2 \frac{\pi m_{xy'}}{p} \right)
= |V^{\perp}| \left( 1 - \frac{c_2 \log n}{n}|X| \right).
\end{split}
\end{equation}

Assume that $n$ is sufficiently large so that $c_2(1 - \frac{\ga_1}{\log n}) > c_3 = \frac{1+c_2}{2}$. By \eqref{eq_3c4} and \eqref{eq_3c5}, 
\begin{align*} \label{eq_3c6}
T(V; X,Y) 
& \le |G \setminus V^{\perp}|^{|Y|} \left( |V| \left( 1 - \frac{c_2 \log n}{n}|Y| \right) \right)^{|X^c|} 
|G \setminus V|^{|X|} \left( |V^{\perp}| \left( 1 - \frac{c_2 \log n}{n}|X| \right) \right)^{|Y^c|} \\
& \le (p^r - |V^{\perp}|)^{|Y|} |V|^{n-|X|} \exp\left( -\frac{c_2 \log n}{n}|Y|(n-|X|) \right) \\
& \times (p^r - |V|)^{|X|} |V^{\perp}|^{n-|Y|} \exp\left( -\frac{c_2 \log n}{n}|X|(n-|Y|) \right) \\
& = p^{rn} \left( \frac{p^r}{|V^{\perp}|} - 1 \right)^{|Y|} \left( \frac{p^r}{|V|} - 1 \right)^{|X|}
\frac{n^{\frac{2c_2}{n}|X||Y|}}{n^{c_2(|X|+|Y|)}} \\
& \le p^{rn} (p^r-1)^{|Y|} (p^r-1)^{|X|}
\frac{n^{\frac{c_2}{n}(|X|+|Y|) \ga_1 \frac{n}{\log n}}}{n^{c_2(|X|+|Y|)}} \\
& \le p^{rn}
\frac{(p^r - 1)^{|X|+|Y|} }{n^{c_3(|X|+|Y|)}}.
\end{align*}
By the above inequality, we have
\begin{align*}
& \frac{1}{p^{rn}} \sum_{V \in S}
\sum_{\substack{X \subseteq [n] \\ 1 \le |X| \le \ga_1 \frac{n}{\log n}}} 
\sum_{\substack{Y \subseteq [n] \\ |Y| \le \ga_1 \frac{n}{\log n}}} T(V; X,Y) \\
\le \, & \sum_{V \in S}
\sum_{\substack{X \subseteq [n] \\ 1 \le |X| \le \ga_1 \frac{n}{\log n}}} 
\sum_{\substack{Y \subseteq [n] \\ |Y| \le \ga_1 \frac{n}{\log n}}} \frac{\left( p^r - 1 \right)^{|X|+|Y|} }{n^{c_3(|X|+|Y|)}} \\
\le \, & \sum_{V \in S} 
\left( \sum_{x=1}^{n} \binom{n}{x}\frac{(p^r-1)^x}{n^{c_3 x}} \right)
\left( \sum_{y=0}^{n} \binom{n}{y} \frac{(p^r-1)^y}{n^{c_3 y}} \right) \\
\le \, & |S| \left( \left( 1 + \frac{p^r-1}{n^{c_3}} \right)^n - 1 \right) \left( 1 + \frac{p^r-1}{n^{c_3}} \right)^n,
\end{align*}
and the last term converges to $0$ as $n \to \infty$ since $c_3>1$.
\end{proof}

\bigskip
\section*{Acknowledgments}

Jungin Lee was supported by the National Research Foundation of Korea (NRF) grant funded by the Korea government (MSIT) (No. RS-2024-00334558 and No. RS-2025-02262988). 


\end{document}